\newcommand{\nc}{\newcommand}
\newtheorem{thm}{Theorem}
\theoremstyle{plain}
\nc{\bthm}{\begin{thm}} \nc{\ethm}{\end{thm}}
\newtheorem{prop}[thm]{Proposition}
\nc{\bprp}{\begin{prop}} \nc{\eprp}{\end{prop}}
\newtheorem{fact}[thm]{Fact}
\nc{\bfct}{\begin{fact}} \nc{\efct}{\end{fact}}
\newtheorem{prob}[thm]{Problem}
\nc{\bprb}{\begin{prob}} \nc{\eprb}{\end{prob}}
\newtheorem{lem}[thm]{Lemma}
\nc{\blem}{\begin{lem}} \nc{\elem}{\end{lem}}
\newtheorem{claim}[thm]{Claim}
\nc{\bclm}{\begin{claim}} \nc{\eclm}{\end{claim}}
\newtheorem{cor}[thm]{Corollary}
\nc{\bcor}{\begin{cor}} \nc{\ecor}{\end{cor}}
\newtheorem{conj}[thm]{Conjecture}
\nc{\bcnj}{\begin{conj}} \nc{\ecnj}{\end{conj}}
\theoremstyle{definition}
\newtheorem{defn}[thm]{Definition}
\nc{\bdfn}{\begin{defn}} \nc{\edfn}{\end{defn}}
\newtheorem{observation}[thm]{Observation}
\nc{\bobs}{\begin{observation}} \nc{\eobs}{\end{observation}}
\theoremstyle{remark}
\newtheorem{rem}[thm]{Remark}
\nc{\brem}{\begin{rem}} \nc{\erem}{\end{rem}}
\newtheorem{cnv}[thm]{Convention}
\nc{\bcnv}{\begin{cnv}} \nc{\ecnv}{\end{cnv}}
\newtheorem{exam}[thm]{Example}
\nc{\bexm}{\begin{exam}} \nc{\eexm}{\end{exam}}
\newtheorem{question}[thm]{Question}
\newenvironment{myquestion}[1]
{\innercustomthm}
{\endinnercustomthm}
\nc{\bpf}{\begin{proof}} \nc{\epf}{\end{proof}}
\nc{\be}{\begin{enumerate}}
	\nc{\ee}{\end{enumerate}}
\nc{\bi}{\begin{itemize}}
	\nc{\itm}{\item}
	\nc{\ei}{\end{itemize}}
\nc{\invlim}{\lim_{\leftarrow}}
\nc{\dirlim}{\lim_{\rightarrow}}
\nc{\mm}{\mathbf{m}}
\nc{\nn}{\mathbf{n}}
\nc{\FF}{\mathcal{F}}
\nc{\CC}{\mathcal{C}}
\nc{\Span}{\operatorname{span}}
\nc{\Img}{\operatorname{Im}}
\nc{\rank}{\operatorname{rank}}
\nc{\proj}{\operatorname{proj}}
\nc{\F}{\mathbb{F}}
\nc{\Z}{\mathbb{Z}}
\nc{\Q}{\mathbb{Q}}
\nc{\Br}{\operatorname{Br}}
\title{Jordan-Holder Theorem for profinite groups and applications}
\author{Tamar Bar-On and Nikolay Nikolov}
\date{\today}
\begin{document}
	\maketitle
	\begin{abstract}
		We generalize the notions of composition series and composition factors for profinite groups, and prove a profinite version of the Jordan-Holder Theorem. We apply this to prove a Galois Theorem for infinite prosolvable extensions. In addition, we investigate the connection between the abstract and topological composition factors of a nonstrongly complete profinite group.
	\end{abstract}
	\section{Composition factors and Jordan-Holder Theorem}
	The composition factors of a finite group are one of its most important invariants, and help translating structural questions in finite group theory to questions on the structure of finite simple groups - which have been classified. 
	
	The fundamental fact about composition factors and their multiplicity is that they do not depend on the composition series:
	\begin{thm}[Jordan-Holder Theorem for finite groups]
		Let $G$ be a finite group and let $\{e\}=G_n \unlhd \cdots G_1\unlhd G_0=G$ and   $\{e\}=H_m \unlhd \cdots H_1\unlhd H_0=G$ be two composition series for $G$. Then for every finite simple group $S$, $|\{0\leq i\leq n-1: G_i/G_{i+1}\cong S\}|=|\{0\leq i\leq m-1: H_i/H_{i+1}\cong S\}|$. In particular $m=n$ and the sets $\{G_i/G_{i+1} | \ 0 \leq i \leq n-1\}$ and $\{H_i/H_{i+1} \ | \ 0 \leq i \leq m-1\}$ are equal.
	\end{thm}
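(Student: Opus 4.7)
The plan is to prove this by strong induction on $|G|$. The base case $|G|=1$ is vacuous, and for $|G|$ prime both series must be $\{e\}\unlhd G$. For the inductive step, suppose the result holds for every group of order $<|G|$, and consider the two given composition series with top factors $G/G_1$ and $G/H_1$.

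I would split into two cases. If $G_1=H_1$, then the tails $\{e\}=G_n\unlhd\cdots\unlhd G_1$ and $\{e\}=H_m\unlhd\cdots\unlhd H_1$ are two composition series of the strictly smaller group $G_1$, and the induction hypothesis immediately transfers the factor multiset equality up to $G$, since the top factor $G/G_1=G/H_1$ is common to both. The interesting case is $G_1\ne H_1$. Here both are maximal normal subgroups of $G$ (a composition series condition), so $G_1H_1$ is a normal subgroup of $G$ strictly containing $G_1$, forcing $G_1H_1=G$. Set $K:=G_1\cap H_1$. By the second isomorphism theorem,
\[
G/G_1\;\cong\;H_1/K,\qquad G/H_1\;\cong\;G_1/K.
\]
Both of these are simple (as the top factors of composition series), hence $K$ is a maximal normal subgroup of each of $G_1$ and $H_1$.

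Next I would pick any composition series $\{e\}=K_\ell\unlhd\cdots\unlhd K_0=K$ of $K$, and splice it to form two new composition series of $G$:
\[
\{e\}=K_\ell\unlhd\cdots\unlhd K_0=K\unlhd G_1\unlhd G,\qquad \{e\}=K_\ell\unlhd\cdots\unlhd K_0=K\unlhd H_1\unlhd G.
\]
The factor multisets of these two spliced series agree, because they differ only by swapping $G/G_1,G_1/K$ with $G/H_1,H_1/K$, and by the isomorphisms above these unordered pairs coincide.

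Finally I would apply the inductive hypothesis twice inside the smaller groups $G_1$ and $H_1$: the original tail inside $G_1$ and the spliced tail $\{e\}=K_\ell\unlhd\cdots\unlhd K=K_0\unlhd G_1$ give the same multiset of factors for $G_1$, and similarly for $H_1$. Chaining these equalities with the equality of the two spliced series' multisets yields the desired equality for the two original series. I do not foresee a sharp obstacle; the only substantive ingredient is the maximality argument giving $G_1H_1=G$ together with the second isomorphism theorem, which is exactly what forces the induction to close. This is the same skeleton that will have to be mimicked in the profinite setting, where the obstacle will instead be ensuring that the analogues of $K$, of the spliced series, and of the multiplicity count behave well with respect to the topology.
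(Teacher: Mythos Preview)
Your argument is correct; this is the classical Zassenhaus/Schreier-style induction, and every step (maximality of $G_1,H_1$ forcing $G_1H_1=G$, the diamond isomorphisms $G/G_1\cong H_1/K$ and $G/H_1\cong G_1/K$, splicing through a composition series of $K$, and two applications of the inductive hypothesis inside $G_1$ and $H_1$) goes through as stated.

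As for comparison: the paper does not give its own proof of this theorem. It merely states the result and refers the reader to Baumslag's \emph{Monthly} note. So there is nothing in the paper to compare your argument against; your writeup is in fact more detailed than what the paper provides. (For what it is worth, Baumslag's note proves the stronger Schreier refinement theorem directly via the butterfly lemma rather than by induction on $|G|$, so your route is the more elementary one specialized to composition series.) Your closing remark about mimicking this skeleton in the profinite case is reasonable intuition, but note that the paper's actual profinite argument (Theorem~\ref{main Theorem}) does \emph{not} proceed by transfinite induction on a diamond; instead it builds an explicit injection $\mathcal{S}_1\to\mathcal{S}_2$ using intersections with the second series and Lemmas~\ref{case distinction}--\ref{intersection of composition series}, then appeals to symmetry.
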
 
	For a proof see for example \cite{Baumslag2006simple}.
	For infinite groups the theorem no longer holds, as can be seen for example, by taking $G=\Z$ and considering the two composition series: $\Z\unrhd p\Z\unrhd p^2\Z\unrhd...$ and  $\Z\unrhd q\Z\unrhd q^2\Z\unrhd...$ for $p\ne q$ primes.
	
	Many generalizations to Jordan-Holder Theorem have been proven, as  in \cite{Birkhoff1934Transfinite} for well-ordered ascending $T$-invariant series, and the Jordan-Holder Theorem for composition factors of modules (see, for example, \cite[Theorem 3.11]{Erdmann2018}). 
	
	The purpose of this paper is to establish a generalization of the Jordan-Holder Theorem for profinite groups. More precisely, we define composition factors and composition series for profinite groups, and show that the composition factors of a profinite groups, as well as their multiplicity, are well-defined and independent on the choice of a composition series.  
	
	Through this paper, unless stated otherwise (which will be relevant only in Section 3) subgroups in a profinite group are assumed to be closed and quotients are assumed to be topological quotients.
	
	Our first observation is that for profinite groups, subnormal series are always replaced by \textit{accessible series}:
	\begin{defn}\cite[Chapter 8.3]{ribes2000profinite}
		Let $G$ be a profinite group and $H\leq G$. An accessible series from $G$ to $H$ over some ordinal $\mu$ is a series $G=G_0>...>G_{\lambda}>...>G_{\mu}=H$ which satisfies the following property:
		\begin{enumerate}
			\item For every $\alpha<\mu$, $G_{\alpha+1}\unlhd G_{\alpha}$.
			\item For every \textit{limit} ordinal $\alpha<\mu$, $G_{\alpha}=\bigcap_{\beta<\alpha}G_{\beta}$.
		\end{enumerate}
		If such a series exists we say that $H$ is an \textit{accessible subgroup}. In case $H=\{e\}$ we refer to an accessible series from $G$ to $\{e\}$ simply as \textit{an accessible series for $G$}. 
	\end{defn}
	Now we can define a \textit{composition series}:
	\begin{defn}
		Let $G$ be a profinite group. A \textit{composition series} for $G$ is an accessible series $G=G_0>...>G_{\lambda}>...>G_{\mu}=\{e\}$ such that $G_{\lambda}/G_{\lambda+1}$ is a finite simple nontrivial group for every $\lambda<\mu$. The quotients $G_{\lambda}/G_{\lambda+1}$ for all $\lambda<\mu$ are called the \textit{composition factors} of $G$.
	\end{defn}
	\begin{prop}\label{well defined composition factors}
		Every profinite group admits a composition series, and the composition factors are well defined. Moreover, the set of composition factors of $G$ is precisely the set of all composition factors of all finite topological quotients of $G$.
	\end{prop}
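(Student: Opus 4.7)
My plan splits the proof into three parts: existence of a composition series, the ``moreover'' identification of composition factors of $G$ with those of its finite continuous quotients, and well-definedness of the set of composition factors as a consequence.

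\textbf{Existence.} I would well-order a family of open normal subgroups of $G$ with trivial intersection, say $\{U_\beta : \beta < \kappa\}$, and transfinitely build $N_0 = G$, $N_{\alpha+1} = N_\alpha \cap U_\alpha$, and $N_\lambda = \bigcap_{\beta<\lambda} N_\beta$ at limits $\lambda$. Each quotient $N_\alpha/N_{\alpha+1}$ embeds in the finite group $G/U_\alpha$, hence is finite; inserting the preimages of a classical composition series for it refines the step $N_\alpha \rhd N_{\alpha+1}$ into finitely many steps with simple quotients. The overall chain is accessible --- finite refinements introduce no new limit ordinals, and at the limits inherited from the original construction the chain equals the intersection by fiat --- and its successive quotients are finite and simple, so it is a composition series.

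\textbf{Direction $\subseteq$ of the ``moreover''.} For $S = G_\alpha/G_{\alpha+1}$ a factor of some composition series $\{G_\alpha\}$ of $G$, I would pick $g \in G_\alpha \setminus G_{\alpha+1}$. Closedness of $G_{\alpha+1}$ in the profinite $G$ gives $G_{\alpha+1} = \bigcap\{G_{\alpha+1} N : N \text{ open normal in } G\}$, so some such $N$ satisfies $g \notin G_{\alpha+1} N$, forcing $G_\alpha N \supsetneq G_{\alpha+1} N$. By the Dedekind identity $G_\alpha \cap G_{\alpha+1} N = G_{\alpha+1}(G_\alpha \cap N)$, the quotient $G_\alpha N / G_{\alpha+1} N$ is a nontrivial quotient of the simple group $S$, hence isomorphic to $S$. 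Embedding the chain $G/N \supseteq G_\alpha N/N \rhd G_{\alpha+1} N/N \supseteq \{e\}$ into a composition series of the finite group $G/N$ and invoking the finite Jordan-Holder theorem then places $S$ among the composition factors of $G/N$.

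\textbf{Direction $\supseteq$ and well-definedness.} Given a composition factor of $G/N$ appearing in $G/N = K_0/N \rhd \cdots \rhd K_r/N = \{e\}$, I would lift it to $G = K_0 \rhd \cdots \rhd K_r = N$ with identical factors, then append a composition series of the profinite subgroup $N$ (from the existence step applied to $N$). Every composition factor of every finite continuous quotient thus occurs in some composition series of $G$. Combined with direction $\subseteq$, the set of composition factors of any composition series of $G$ equals $\bigcup_N \{\text{composition factors of } G/N\}$ as $N$ ranges over the open normal subgroups --- an invariant intrinsic to $G$, and in particular independent of the chosen composition series.

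The \textbf{main obstacle} will be direction $\subseteq$: pinning down the isomorphism $G_\alpha N/G_{\alpha+1} N \cong S$ through the Dedekind identity and the simplicity of $S$, and then ensuring $S$ genuinely survives as a factor when the two-step chain in $G/N$ is refined into a full composition series --- this is what finite Jordan-Holder guarantees. A subsidiary care point is preserving the accessible-series condition at limit ordinals after the finite refinements in the existence construction, which is automatic because each refinement block has finite length and so introduces no fresh limit ordinals.
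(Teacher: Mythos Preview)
Your existence argument and your direction $\subseteq$ are fine and close in spirit to the paper's. The gap is in your direction $\supseteq$ and the well-definedness conclusion you draw from it.

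What you actually establish in $\supseteq$ is: for each composition factor $S$ of some $G/N$, there exists \emph{one particular} composition series of $G$ (namely the lift of a series in $G/N$ followed by a series for $N$) in which $S$ appears. Combined with your $\subseteq$ this yields only
\[
\text{factors}(\mathcal C)\ \subseteq\ \bigcup_N F(G/N)\ \subseteq\ \bigcup_{\mathcal C'}\text{factors}(\mathcal C'),
\]
which does \emph{not} give $\bigcup_N F(G/N)\subseteq \text{factors}(\mathcal C)$ for an \emph{arbitrary} fixed composition series $\mathcal C=\{G_\alpha\}$. So the claimed equality, and hence the well-definedness, does not follow.

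What is missing is exactly what the paper supplies: for a \emph{given} composition series $\{G_\alpha\}$ and any finite continuous quotient $\varphi_A:G\to A$, one shows that the distinct images $\varphi_A(G_\alpha)$ themselves form a composition series of $A$, whose factors are among the $G_\alpha/G_{\alpha+1}$. Finite Jordan--H\"older then forces every composition factor of $A$ to occur as some $G_\alpha/G_{\alpha+1}$. The only nontrivial point in that argument is ruling out a strict drop in the image chain at a \emph{limit} ordinal $\lambda$: one needs $\varphi_A(G_\lambda)=\varphi_A\bigl(\bigcap_{\beta<\lambda}G_\beta\bigr)=\bigcap_{\beta<\lambda}\varphi_A(G_\beta)$, which is a compactness fact (Proposition~2.1.4 in Ribes--Zalesskii). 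Your Dedekind-identity computation already handles the successor steps; you just need to add this limit-ordinal step and then run $\supseteq$ for the \emph{given} series rather than manufacturing a new one.
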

	\begin{proof}
		First we show that every profinite group admits a composition series. Let $\eta=|G|$ and $\nu=2^{\eta}$. We define a composition series over $\nu$ recursively, as follows. Assume that for all $\alpha<\beta$ $G_{\alpha}$ has been defined. If $\beta=\gamma+1$ for some $\gamma$ then we define $G_{\beta}$ to be a maximal open normal subgroup of $G_{\gamma}$ in case $G_{\gamma}\ne \{e\}$ and to be $\{e\}$ otherwise. If $\beta$ is a limit ordinal then we define $G_{\beta}=\bigcap_{\alpha<\beta}G_{\alpha}$. We claim that $G_{\nu}=\{e\}$. Indeed, otherwise we can use the axiom of choice to define a one-to-one map $\nu\to G$ by sending each $\alpha$ to an element in $G_{\alpha}\setminus G_{\alpha+1}$. Let $\mu$ be the first ordinal for which $G_{\mu}=\{e\}$, then $G=G_0>...>G_{\lambda}>...>G_{\mu}=\{e\}$ is a composition series for $G$. 
		
		Now we prove that for every composition series $\{G_{\alpha}\}_{\alpha<\mu}$, the set \[C_G:=\{G_{\alpha}/G_{\alpha+1}\}_{\alpha<\mu} \] equals the set of composition factors of all finite quotients of $G$, and that will imply that $C_G$ does not depend on the choice of a composition series.
		
		First we show the following: let $\{G_{\alpha}\}_{\alpha<\mu}$ be a composition series for $G$ and $A$ a finite image of $G$ with some canonical projection $\varphi_A:G\to A$. Then the image series   $\{\varphi_A({G_{\alpha}})\}_{\alpha<\mu}$ is clearly a subnormal series in $A$ which ends in $\{e\}$. Denote the different subgroups in this series by $A=A_0\unrhd A_1\unrhd \cdots A_n=\{e\}$. For $0<i\leq n$, let $\alpha$ be the first ordinal such that $\varphi_A({G_{\alpha}})=A_i$. We claim that $\alpha$ is a successor ordinal. Otherwise, for every $\beta<\alpha$ $\varphi_A({G_{\beta}})\geq A_{i-1}$. Then by \cite[Proposition 2.1.4]{ribes2000profinite} $$\varphi_A({G_{\alpha}})=\varphi_A({\bigcap_{\beta<\alpha}G_{\beta}})=\bigcap_{\beta<\alpha}\varphi_A({G_{\alpha}})\geq A_{i-1}$$
		Now let $\alpha=\beta+1$. Then $\varphi_A(G_{\beta})=A_{i-1}$ and since $G_{\beta}/G_{\beta+1}$ is simple, $G_{\beta}/G_{\beta+1}\cong A_{i-1}/A_i$. In particular, $A=A_0\unrhd A_1\unrhd \cdots A_n=\{e\}$ is a composition series and all its factors belongs to the set of the factors $\{G_{\alpha}/G_{\alpha+1}:\alpha<\mu\}$. We conclude that the set of the factors $\{G_{\alpha}/G_{\alpha+1}:\alpha<\mu\}$ contains the set of composition factors of all finite quotients of $G$.
		
		Conversely, let $\alpha<\mu$. There is some finite quotient $A$ of $G$ in which $\varphi_A({G_{\alpha}})\ne\varphi_A({G_{\alpha+1}})$. By the argument above, the different members of $\{\varphi_A(G_{\alpha})\}_{\alpha<\mu}$ form a composition series $A=A_0\unrhd A_1\unrhd \cdots A_n=\{e\}$ of $A$. Hence if $\varphi_A(G_{\alpha})=A_i$ then $\varphi(G_{\alpha+1})=A_{i+1}$ and $G_{\alpha}/G_{\alpha+1}\cong A_{i}/A_{i+1}$, again from the simplicity of $G_{\alpha}/G_{\alpha+1}$. In particular, $G_{\alpha}/G_{\alpha+1}$ is a composition factor of $A$, and we are done.
	\end{proof}
	Recall that a profinite group is called prosolvable if all its finite quotients are solvable.
	\begin{cor}\label{equibalent criterion for prosolvable}
		Let $G$ be a profinite group. Then $G$ is prosolvable if and only if all its composition factors are abelian.
	\end{cor}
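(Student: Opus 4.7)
The plan is to derive this corollary directly from Proposition \ref{well defined composition factors}, which identifies the composition factors of $G$ with the composition factors of all finite topological quotients of $G$. The key classical fact to invoke is that a finite group is solvable if and only if all of its composition factors are abelian (equivalently, cyclic of prime order).

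For the forward direction, I would assume $G$ is prosolvable. Let $S$ be a composition factor of $G$. By Proposition \ref{well defined composition factors}, $S$ is a composition factor of some finite quotient $A$ of $G$. Since $G$ is prosolvable, $A$ is solvable, and hence all composition factors of $A$, including $S$, are abelian.

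For the reverse direction, I would assume every composition factor of $G$ is abelian, and let $A$ be any finite topological quotient of $G$. Again by Proposition \ref{well defined composition factors}, every composition factor of $A$ is a composition factor of $G$, hence abelian. Therefore $A$ is solvable, and since this holds for every finite quotient, $G$ is prosolvable.

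There is essentially no obstacle here: the proposition already did the heavy lifting of connecting composition factors of $G$ to those of its finite quotients, so the corollary reduces to the classical finite fact. The only point requiring a touch of care is making sure the quantifiers line up correctly, i.e.\ that ``every composition factor of $G$ is abelian'' matches with ``every composition factor of every finite quotient is abelian,'' which is exactly the content guaranteed by the proposition.
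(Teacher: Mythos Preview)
Your proof is correct and is exactly the intended argument: the paper states this result as an immediate corollary of Proposition \ref{well defined composition factors} without giving a separate proof, and your reasoning---reducing both directions to the classical fact that a finite group is solvable iff its composition factors are abelian via the identification in that proposition---is precisely what is meant.
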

	Now we wish to prove the second part of the main goal of this paper:
	\begin{thm}\label{main Theorem}
		Let $G$ be a profinite group and $S$ a finite simple group. Let $G=G_0>...>G_{\alpha}>...>G_{\mu}=\{e\}$ and $G=H_0>...>H_{\lambda}>...>H_{\Delta}=\{e\}$  be two composition series for $G$. Let $S$ be some finite simple group. Denote by $\mathcal{S}_1=\{\alpha<\mu:G_{\alpha}/G_{\alpha+1}\cong S\} $ and $\mathcal{S}_2=\{\lambda<\Delta:H_{\lambda}/H_{\lambda+1}\cong S\}$. Then $|\mathcal{S}_1|=|\mathcal{S}_2|$.
	\end{thm}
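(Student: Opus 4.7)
The plan is to reduce everything to the classical Jordan-Hölder theorem applied to the finite topological quotients $G/U$, reusing the mechanism from the proof of Proposition \ref{well defined composition factors}. For each open normal subgroup $U$ of $G$, set
\[
\mathcal{S}_1(U) := \{\alpha \in \mathcal{S}_1 : \varphi_U(G_\alpha) \neq \varphi_U(G_{\alpha+1})\}, \qquad
\mathcal{S}_2(U) := \{\lambda \in \mathcal{S}_2 : \varphi_U(H_\lambda) \neq \varphi_U(H_{\lambda+1})\},
\]
and let $m_U(S)$ be the multiplicity of $S$ as a composition factor of $G/U$. By the argument already used in the proof of Proposition \ref{well defined composition factors}, the distinct images of the $G_\alpha$ (respectively the $H_\lambda$) form a composition series of the finite group $G/U$, and the simplicity of $G_\alpha/G_{\alpha+1}$ identifies it with the corresponding factor in that series. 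The finite Jordan-Hölder theorem applied to $G/U$ then yields $|\mathcal{S}_1(U)| = |\mathcal{S}_2(U)| = m_U(S)$.

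Next, since the open normal subgroups of $G$ have trivial intersection and $G_\alpha \supsetneq G_{\alpha+1}$ for every $\alpha \in \mathcal{S}_1$, there is some $U$ with $G_\alpha \cdot U \ne G_{\alpha+1}\cdot U$, and hence $\alpha \in \mathcal{S}_1(U)$. Therefore $\mathcal{S}_1 = \bigcup_U \mathcal{S}_1(U)$, and the family $\{\mathcal{S}_1(U)\}_U$ is directed (if $V \subseteq U$ then $\mathcal{S}_1(U) \subseteq \mathcal{S}_1(V)$, since any two subgroups distinguished modulo $U$ are distinguished modulo $V$); the analogous statements hold for $\mathcal{S}_2$.

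Set $M := \sup_U m_U(S)$. If $M < \infty$, pick $U_0$ with $m_{U_0}(S) = M$; for any $U$, both $\mathcal{S}_1(U_0)$ and $\mathcal{S}_1(U \cap U_0)$ have cardinality $M$ while the former is contained in the latter, so they coincide, forcing $\mathcal{S}_1(U) \subseteq \mathcal{S}_1(U_0)$. Hence $\mathcal{S}_1 = \mathcal{S}_1(U_0)$, $|\mathcal{S}_1| = M$, and symmetrically $|\mathcal{S}_2| = M$. If $M = \infty$, I would build a bijection $\mathcal{S}_1 \to \mathcal{S}_2$ by a Hall-marriage / transfinite back-and-forth construction compatible with the filtrations $\{\mathcal{S}_i(U)\}$: the bipartite graph joining $\alpha \in \mathcal{S}_1$ to $\lambda \in \mathcal{S}_2$ whenever $\alpha \in \mathcal{S}_1(U)$ and $\lambda \in \mathcal{S}_2(U)$ for some common $U$ satisfies Hall's condition, since any finite $F \subseteq \mathcal{S}_1$ embeds into some $\mathcal{S}_1(U)$ whose partner $\mathcal{S}_2(U)$ has the same finite cardinality $\ge |F|$, by the finite case.

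The main obstacle I expect is precisely the infinite case: two directed unions of sets with the same cardinality data can in principle have unequal colimits, so the matching must exploit more than the per-$U$ equality $|\mathcal{S}_1(U)| = |\mathcal{S}_2(U)|$ -- specifically, the facts that each $\mathcal{S}_i(U)$ is finite and that the two filtrations are indexed coherently over the same poset of open normal subgroups of $G$, so that a compatible system of bijections $\mathcal{S}_1(U) \to \mathcal{S}_2(U)$ can be assembled via Zorn or transfinite recursion.
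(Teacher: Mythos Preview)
Your finite-multiplicity case is correct and is essentially the paper's Lemma~\ref{finite case}. The difficulty you flag in the infinite case is real, and neither of your proposed fixes closes it. First, the bipartite graph you describe is the \emph{complete} bipartite graph on $\mathcal{S}_1 \times \mathcal{S}_2$: for any $\alpha\in\mathcal{S}_1$ and $\lambda\in\mathcal{S}_2$, pick $U_1$ with $\alpha \in \mathcal{S}_1(U_1)$ and $U_2$ with $\lambda \in \mathcal{S}_2(U_2)$; then $U = U_1 \cap U_2$ witnesses the edge. Hall's condition is therefore vacuous, and in any case the infinite Hall theorem needs each vertex to have finite neighbourhood, which fails here. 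Second, a compatible system of bijections $\mathcal{S}_1(U) \to \mathcal{S}_2(U)$ need not exist even in principle: let $I$ be the directed poset of finite subsets of $\omega_1$, set $A_F = \{0, \ldots, |F|-1\} \subseteq \omega$ and $B_F = F \subseteq \omega_1$. Then $|A_F| = |B_F|$ for every $F$, both systems are monotone, yet $\bigl|\bigcup_F A_F\bigr| = \aleph_0$ while $\bigl|\bigcup_F B_F\bigr| = \aleph_1$. Nothing in the data you have extracted distinguishes $\{\mathcal{S}_i(U)\}_U$ from this example when $G$ is not second countable, so neither Zorn nor back-and-forth will manufacture the bijection without further input.

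The paper takes a different route that bypasses the finite quotients in the infinite case and compares the two series \emph{against each other}. For $\alpha \in \mathcal{S}_1$ it sends $\alpha$ to the least $\beta$ with $(G_\alpha \cap H_\beta)H_{\beta+1} \neq (G_{\alpha+1} \cap H_\beta)H_{\beta+1}$. Lemma~\ref{different in pieces} (a compactness argument along the accessible series $\{H_\beta\}$) guarantees such a $\beta$ exists; Lemma~\ref{intersection of composition series} shows $\{G_\alpha\cap H_\beta\}_\alpha$ is a composition series of $H_\beta$, which forces $H_\beta/H_{\beta+1} \cong S$ and moreover $(G_{\alpha+1} \cap H_\beta)H_{\beta+1} = H_{\beta+1}$. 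The latter equality makes the map injective, since for any $\gamma > \alpha$ one then has $(G_\gamma \cap H_\beta)H_{\beta+1} = (G_{\gamma+1} \cap H_\beta)H_{\beta+1} = H_{\beta+1}$. Symmetry gives the reverse inequality. The missing idea in your approach is precisely this head-to-head intersection of one series with the other; the per-quotient equalities $|\mathcal{S}_1(U)| = |\mathcal{S}_2(U)|$ alone do not control the cardinality of the union.
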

	
	First we prove a stronger result in the case of $\mathcal{S}_1$ being finite. Let $S$ be a simple group. For a finite quotient $A$ of $G$ we denote by $n_{S,A}$ the number of appearances of $S$ as a factor in some, and hence any, composition series of $A$. 
	\begin{lem}\label{finite case}
		Let $n$ be some natural number. Then $|\mathcal{S}_1|=n$ if and only if $$n=\max\{n_{S,A} \ | \ A \text { is a finite quotient of $G$} \}$$
	\end{lem}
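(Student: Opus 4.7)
The plan is to prove two inequalities: $n_{S,A}\leq|\mathcal{S}_1|$ for every finite quotient $A$ of $G$, and, when $|\mathcal{S}_1|=n$ is finite, the existence of a single finite quotient $A$ realizing equality $n_{S,A}=n$.

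For the upper bound I will reuse the analysis from the proof of Proposition~\ref{well defined composition factors}. Given a finite quotient $A$ with projection $\varphi_A\colon G\to A$, the distinct terms of $\{\varphi_A(G_\alpha)\}_{\alpha<\mu}$ form a composition series $A=A_0\unrhd A_1\unrhd\cdots\unrhd A_k=\{e\}$, and each jump $A_j/A_{j+1}$ is realized at a unique successor ordinal $\beta+1$ with $G_{\beta}/G_{\beta+1}\cong A_j/A_{j+1}$. Thus the correspondence $j\mapsto\beta$ injects the set of indices $j$ with $A_j/A_{j+1}\cong S$ into $\mathcal{S}_1$, giving $n_{S,A}\leq|\mathcal{S}_1|$.

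For the lower bound, enumerate $\mathcal{S}_1=\{\alpha_1,\ldots,\alpha_n\}$ and for each $i$ pick $x_i\in G_{\alpha_i}\setminus G_{\alpha_i+1}$. Since $G_{\alpha_i+1}$ is closed in the profinite group $G$, the topology furnishes an open normal subgroup $N_i\unlhd G$ with $x_i\notin G_{\alpha_i+1}N_i$, so $G_{\alpha_i}\not\subseteq G_{\alpha_i+1}N_i$. The intersection $N:=\bigcap_{i=1}^{n}N_i$ is open and normal, and for $A:=G/N$ with projection $\varphi_A$ we obtain $\varphi_A(G_{\alpha_i})\neq\varphi_A(G_{\alpha_i+1})$ for every $i$. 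Two ordinals $\alpha_i<\alpha_{i'}$ in $\mathcal{S}_1$ with nontrivial images then yield distinct jumps in the composition series of $A$ via strict inclusions $\varphi_A(G_{\alpha_i})\supsetneq\varphi_A(G_{\alpha_i+1})\supseteq\varphi_A(G_{\alpha_{i'}})\supsetneq\varphi_A(G_{\alpha_{i'}+1})$. By the upper-bound correspondence each such jump is $\cong S$, so $n_{S,A}\geq n$, and combined with the upper bound we get equality.

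Finally, the case $|\mathcal{S}_1|$ infinite must be excluded: the same separation argument applied to any $m$-element subset of $\mathcal{S}_1$ produces a finite quotient with $n_{S,A}\geq m$, so no natural number can be the maximum; hence the hypothesis $n=\max_A n_{S,A}$ already forces $|\mathcal{S}_1|$ to be finite. I expect the main obstacle to be the lower bound, where all $n$ simple jumps must be realized simultaneously in a single finite quotient; this rests on the standard profinite fact that a closed proper inclusion is detected in some open normal quotient, applied once per $\alpha_i$ and pooled by finite intersection, with the rest being bookkeeping inside the correspondence of Proposition~\ref{well defined composition factors}.
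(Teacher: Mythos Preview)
Your proposal is correct and follows essentially the same route as the paper: both directions use the image-series correspondence from Proposition~\ref{well defined composition factors}, and the lower bound is obtained by choosing a single finite quotient in which all $n$ pairs $G_{\alpha_i}\supsetneq G_{\alpha_i+1}$ remain separated. Your treatment is slightly more explicit (constructing $N=\bigcap_i N_i$ rather than invoking the existence of such a quotient, and spelling out why an infinite $\mathcal{S}_1$ precludes the maximum being a natural number), but these are elaborations of the paper's argument rather than a different approach.
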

	\begin{proof}
		As was shown in the proof of Proposition \ref{well defined composition factors}, the different image subgroups $\varphi_A({G_{\alpha}}), \alpha< \mu$ induce a composition series $A=A_0\unrhd \cdots A_m=\{e\}$ of $A$, for every finite image $A$ of $G$. Moreover, there are $\alpha_0<...<\alpha_m<\mu$ such that $G_{\alpha_i}/G_{\alpha_i+1}\cong A_i/A_{i+1}$. Hence, $|\mathcal{S}_1|\geq n_{S,A}$ for every finite quotient $A$ of $G$.
		
		On the other hand, assume that $l$ is a natural number and there are  $\alpha_0<...<\alpha_l<\mu$ such that $G_{\alpha_i}/H_{\alpha_i+1}\cong S$ for all $0\leq i\leq l$. There is some finite quotient $A$ of $G$ for which $\varphi_A({G_{\alpha_i}})\ne \varphi_A({G_{\alpha_i+1}})$ for all $0\leq i\leq l$. In particular, if   we denote by $A=A_0\unrhd \cdots A_m=\{e\}$ the induced composition series of $A$, and $\varphi_A({G_{\alpha_i}})=A_{k_i}$ for some $0\leq k_i<m$ then $\varphi_A({G_{\alpha_i+1}})=A_{k_i+1}$. Moreover, $\varphi_A({G_{\alpha_i}})\ne \varphi_A({G_{\alpha_j}})$ for all $j>i$. Thus, there are $0\leq k_1<...<k_l\leq m$ such that $A_{k_i}/A_{k_i+1}\cong \varphi_A({G_{\alpha_i}})/\varphi_A({G_{\alpha_i+1}})\cong S$ and we conclude that $\max\{n_{S,A}:A\text { is a finite quotient of $G$}\}\geq n_{S,A}\geq |\mathcal{S}_1|$.
	\end{proof}
	Before we can prove Theorem \ref{main Theorem} in full generality we need some Lemmas.
	\begin{lem}\label{case distinction}
		Let $A\ne B$ and $H$ be subgroups of a given group $G$ such that $B$ is strictly contained in $A$ and $A\cap H=B\cap H$. Then $AH\ne BH$. 
	\end{lem}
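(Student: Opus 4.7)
The plan is to argue by contradiction: assume that the product sets $AH$ and $BH$ coincide, and derive from the hypothesis $A\cap H = B\cap H$ that every element of $A$ already lies in $B$, contradicting the strict inclusion $B\subsetneq A$.

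Concretely, suppose $AH=BH$. Pick any $a\in A$. Since $a = a\cdot e \in AH = BH$, we can write $a=bh$ with $b\in B$ and $h\in H$. Then $h = b^{-1}a$ lies in $A$ (as a product of two elements of $A$, using $B\subseteq A$) and also in $H$ by construction, so $h\in A\cap H$. By hypothesis $A\cap H = B\cap H$, hence $h\in B$. But then $a = bh$ is a product of two elements of the subgroup $B$, so $a\in B$. As $a\in A$ was arbitrary, this gives $A\subseteq B$, contradicting the assumption that $B$ is strictly contained in $A$.

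Note that I do not need $AH$ or $BH$ to be subgroups of $G$; the argument uses only that they are equal as subsets (sets of products) and that $B$ itself is a subgroup so that $bh = b\cdot h \in B$ once we have shown $h\in B$. There is no real obstacle here — the only subtle point is to recognize that $b^{-1}a$ lies in $A\cap H$, which uses both $B\subseteq A$ and the fact that $h=b^{-1}a$ by construction — so the hypothesis $A\cap H = B\cap H$ is used exactly once and in precisely the way one would expect.
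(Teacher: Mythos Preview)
Your proof is correct and follows essentially the same argument as the paper: assume $AH=BH$, take $a\in A$, write $a=bh$ with $b\in B$, $h\in H$, observe $h=b^{-1}a\in A\cap H=B\cap H$, and conclude $a\in B$. The only cosmetic difference is that the paper starts with $a\in A\setminus B$ and derives an immediate contradiction, whereas you take arbitrary $a\in A$ and conclude $A\subseteq B$; the logic is identical.
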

	\begin{proof}
		Assume by contradiction that $AH=BH$. Let $a\in A\setminus B$, then $a\in BH$. Thus there exist $b\in B,h\in H$ such that $a=bh$. Since $B\leq A$, $b\in A\Rightarrow h\in A$. Thus $h\in A\cap H=B\cap H\Rightarrow h\in B$ and we get that $a\in B$. A contradiction.
	\end{proof}
	\begin{lem}\label{different in pieces}
		Let $G$ be a profinite group and $A,B\leq G$ such that $B$ is strictly contained in $A$. Let $\{H_{\alpha}:\alpha<\mu\}$ be an accessible series from $G$ to some accessible subgroup $H$ and assume that $AH\ne BH$. Then there exists some $\alpha$ such that $(A\cap H_{\alpha})H_{\alpha+1}\ne (B\cap H_{\alpha})H_{\alpha+1}$.
	\end{lem}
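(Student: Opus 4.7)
The plan is to prove the contrapositive: if $(A\cap H_\alpha)H_{\alpha+1}=(B\cap H_\alpha)H_{\alpha+1}$ for every $\alpha<\mu$, then $AH=BH$, which contradicts the hypothesis. The whole argument will be a transfinite induction on $\alpha\leq\mu$ of the statement
\[
Q(\alpha):\ AH_\alpha=BH_\alpha.
\]
Since $H_0=G$, $Q(0)$ is the trivial identity $G=G$, while $Q(\mu)$ reads $AH=BH$, giving the contradiction.

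For the successor step, suppose $Q(\alpha)$ holds and fix $a\in A$. Then $a\in AH_\alpha=BH_\alpha$, so $a=bx$ with $b\in B$ and $x\in H_\alpha$. Because $B\subseteq A$, the element $x=b^{-1}a$ lies in $A\cap H_\alpha$, and the standing hypothesis at $\alpha$ gives $x=b'h$ with $b'\in B\cap H_\alpha$ and $h\in H_{\alpha+1}$. Hence $a=(bb')h\in BH_{\alpha+1}$. Combined with the trivial inclusion $BH_{\alpha+1}\subseteq AH_{\alpha+1}$ (from $B\subseteq A$), this yields $Q(\alpha+1)$.

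The main obstacle is the limit step, since the definition of $H_\beta$ at a limit ordinal involves an \emph{intersection} while $Q(\alpha)$ is a statement about \emph{products}. The needed identity is
\[
CH_\beta \;=\; \bigcap_{\alpha<\beta}CH_\alpha
\]
for any closed subgroup $C\leq G$, and this is where the profinite hypothesis is essential. I would establish it by compactness: the inclusion $\subseteq$ is clear, and for the reverse, given $g\in\bigcap_{\alpha<\beta}CH_\alpha$, the sets $gH_\alpha\cap C$ are nonempty closed subsets of the compact set $C$; because the $H_\alpha$ decrease as $\alpha$ grows, they form a filter base, so by compactness they admit a common element $b\in C$. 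Then $b^{-1}g\in H_\alpha$ for every $\alpha<\beta$, hence $b^{-1}g\in H_\beta$ and $g\in CH_\beta$. Applying this identity to both $C=A$ and $C=B$ and combining with the inductive hypothesis $AH_\alpha=BH_\alpha$ for all $\alpha<\beta$ gives $AH_\beta=\bigcap_{\alpha<\beta}AH_\alpha=\bigcap_{\alpha<\beta}BH_\alpha=BH_\beta$, completing $Q(\beta)$ and hence the induction.
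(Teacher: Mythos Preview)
Your proof is correct and follows essentially the same outline as the paper's. Both arguments reduce the limit case to the identity $CH_\beta=\bigcap_{\alpha<\beta}CH_\alpha$ for closed $C\leq G$ (the paper cites \cite[Proposition~2.1.4]{ribes2000profinite}; you re-prove it directly by compactness), and the paper's ``first $\beta$ with $AH_\beta\ne BH_\beta$'' is the dual formulation of your transfinite induction on $Q(\alpha)$. The one place your argument genuinely diverges is the successor step: where the paper invokes the modular-law identity $(A\cap H_\alpha)H_{\alpha+1}=AH_{\alpha+1}\cap H_\alpha$ and then appeals to Lemma~\ref{case distinction}, you carry out a short element chase that bypasses both ingredients. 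This makes your proof self-contained (Lemma~\ref{case distinction} is no longer needed), while the paper's version has the minor advantage of explicitly identifying the witness $\alpha$ as the predecessor of the first failure point.
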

	\begin{proof}
		Let $\beta<\mu$ be the first ordinal such that $AH_{\beta}\ne BH_{\beta}$. We claim that $\beta$ is a successor ordinal. Otherwise, $H_{\beta}=\bigcap_{\alpha<\beta} H_{\alpha}$ and by \cite[Proposition 2.1.4]{ribes2000profinite} $AH_{\beta}=A\bigcap_{\alpha<\beta} H_{\alpha}=\bigcap_{\alpha<\beta}AH_{\alpha}= \bigcap_{\alpha<\beta}BH_{\alpha}=B\bigcap_{\alpha<\beta} H_{\alpha}=BH_{\beta}$, a contradiction. Thus $\beta=\alpha+1$ for some $\alpha$. Notice that $(A\cap H_{\alpha})H_{\alpha+1}=AH_{\alpha+1}\cap H_{\alpha}$ and the same holds for $B$. Since $AH_{\alpha+1}\ne BH_{\alpha+1}$ and $AH_{\alpha+1}H_{\alpha}=AH_{\alpha}=BH_{\alpha}=BH_{\alpha+1}H_{\alpha}$, Putting $A'=AH_{\alpha+1}, B'=BH_{\alpha+1}, H=H_{\alpha}$, Lemma \ref{case distinction} applies the required.
	\end{proof}
	
	\begin{lem}\label{intersection of composition series}
		Let $\{G_{\alpha}:\alpha<\mu\}$ be a composition series for $G$, and $H$ be an accessible subgroup of $G$. Then $G_{\alpha}\cap H$ is a composition series for $H$.
	\end{lem}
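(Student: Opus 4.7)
The plan is to verify the accessible-series axioms for $\{G_\alpha \cap H\}_{\alpha<\mu}$ and then establish the key simple-quotient property. The two accessible-series axioms come for free: successor normality $G_{\alpha+1}\cap H \unlhd G_\alpha \cap H$ follows from $G_{\alpha+1} \unlhd G_\alpha$, while the limit condition $G_\alpha \cap H = \bigcap_{\beta<\alpha}(G_\beta \cap H)$ is mere commutativity of intersections applied to $G_\alpha = \bigcap_{\beta<\alpha} G_\beta$. The substantive content is to show that each quotient $(G_\alpha \cap H)/(G_{\alpha+1}\cap H)$ is either trivial (a duplicate to be dropped) or isomorphic to the finite simple group $G_\alpha/G_{\alpha+1}$. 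By the Second Isomorphism Theorem this quotient is isomorphic to $M_\alpha/G_{\alpha+1}$ for $M_\alpha:=(G_\alpha\cap H)\,G_{\alpha+1}$, so the claim reduces to $M_\alpha\in\{G_\alpha,G_{\alpha+1}\}$.

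I would prove this stronger statement by transfinite induction on $\tau$ along an accessible series $G=K_0>K_1>\cdots>K_\nu=H$ witnessing the accessibility of $H$. The inductive invariant reads: for every $\tau\leq\nu$ and every $\alpha<\mu$, the subgroup $M_\alpha^\tau:=(G_\alpha\cap K_\tau)\,G_{\alpha+1}$ belongs to $\{G_\alpha,G_{\alpha+1}\}$. The base $\tau=0$ gives $M_\alpha^0=G_\alpha G_{\alpha+1}=G_\alpha$, and specializing to $\tau=\nu$ proves the desired claim.

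For the successor step $\tau=\sigma+1$, the crucial fact is that $K_\tau\unlhd K_\sigma$ forces $G_\alpha\cap K_\tau\unlhd G_\alpha\cap K_\sigma$. By the inductive hypothesis $M_\alpha^\sigma\in\{G_\alpha,G_{\alpha+1}\}$, so by the Second Isomorphism Theorem the quotient $(G_\alpha\cap K_\sigma)/(G_{\alpha+1}\cap K_\sigma)\cong M_\alpha^\sigma/G_{\alpha+1}$ is either trivial or the finite simple group $G_\alpha/G_{\alpha+1}$. The image of $G_\alpha\cap K_\tau$ in this quotient is a normal subgroup, being the projection of a normal subgroup of $G_\alpha\cap K_\sigma$, and is therefore either trivial or the whole quotient. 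Multiplying through by $G_{\alpha+1}$ and using $G_{\alpha+1}\cap K_\sigma\subseteq G_{\alpha+1}$, these two cases become $M_\alpha^\tau=G_{\alpha+1}$ and $M_\alpha^\tau=M_\alpha^\sigma$ respectively, both of which lie in $\{G_\alpha,G_{\alpha+1}\}$.

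The main obstacle is the limit step. For limit $\tau$, the descending chain $\{G_\alpha\cap K_\sigma\}_{\sigma<\tau}$ of closed subgroups of $G_\alpha$ satisfies
\[ M_\alpha^\tau \;=\; \Bigl(\bigcap_{\sigma<\tau}(G_\alpha\cap K_\sigma)\Bigr)\,G_{\alpha+1} \;=\; \bigcap_{\sigma<\tau}\bigl((G_\alpha\cap K_\sigma)\,G_{\alpha+1}\bigr) \;=\; \bigcap_{\sigma<\tau} M_\alpha^\sigma, \]
where the middle equality is exactly \cite[Proposition 2.1.4]{ribes2000profinite}. Since each $M_\alpha^\sigma\in\{G_\alpha,G_{\alpha+1}\}$ by the inductive hypothesis and the $M_\alpha^\sigma$ descend monotonically with $\sigma$, the intersection equals $G_\alpha$ if every $M_\alpha^\sigma=G_\alpha$, and equals $G_{\alpha+1}$ once some $M_\alpha^\sigma=G_{\alpha+1}$. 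The induction closes, and after dropping duplicates and reindexing, the series $\{G_\alpha\cap H\}$ becomes a bona fide composition series for $H$ whose simple factors are precisely those $G_\alpha/G_{\alpha+1}$ for which $(G_\alpha\cap H)/(G_{\alpha+1}\cap H)$ is nontrivial.
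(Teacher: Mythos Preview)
Your proof is correct and follows essentially the same route as the paper: both fix $\alpha$ and run a transfinite induction along an accessible series $G=K_0>\cdots>K_\nu=H$, using normality of $K_{\sigma+1}$ in $K_\sigma$ together with simplicity of $G_\alpha/G_{\alpha+1}$ at successor stages, and \cite[Proposition~2.1.4]{ribes2000profinite} at limit stages. The only cosmetic difference is that you track the invariant via $M_\alpha^\tau=(G_\alpha\cap K_\tau)G_{\alpha+1}\in\{G_\alpha,G_{\alpha+1}\}$, whereas the paper phrases it equivalently as $(G_\alpha\cap K_\tau)/(G_{\alpha+1}\cap K_\tau)$ being trivial or isomorphic to $G_\alpha/G_{\alpha+1}$.
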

	\begin{proof}
		It is clear that this series is accessible. Fix an $\alpha<\mu$, we want to show that $(G_{\alpha}\cap H)/(G_{\alpha+1}\cap H)$ is either trivial or isomorphic to $G_{\alpha}/G_{\alpha+1}$.  Let $G=H_0\unrhd...\unrhd H_\beta\unrhd ...\unrhd H_{\Delta}=H$ be an accessible series from $G$ to $H$. The claim is now translated to $(G_{\alpha}\cap H_{\Delta})/(G_{\alpha+1}\cap H_{\Delta})$ is either trivial or isomorphic to $G_{\alpha}/G_{\alpha+1}$. We will prove by induction on $\beta$ that in fact for every $\beta\leq \Delta$ $(G_{\alpha}\cap H_{\beta})/(G_{\alpha+1}\cap H_{\beta})$ is either trivial or isomorphic to $G_{\alpha}/G_{\alpha+1}$. 
		
		Obviously, for $\beta=0$ the claim holds. Now let $\beta\leq \Delta$ and assume the claim holds for all $\gamma<\beta$. If $H_{\gamma}\cap G_{\alpha}=H_{\gamma}\cap G_{\alpha+1}$ for some $\gamma<\beta$ then the same holds for $H_{\beta}$. Now assume that $G_{\alpha}\cap H_{\gamma}/G_{\alpha+1}\cap H_{\gamma}\ne \{e\}$ for every $\gamma<\beta$. 
		
		First case: $\beta$ is limit. By induction assumption, $(G_{\alpha}\cap H_{\gamma})/(G_{\alpha+1}\cap H_{\gamma})\cong G_{\alpha}/G_{\alpha+1}$ for every $\gamma<\beta$. Recall that $G_{\alpha}/G_{\alpha+1}$ is finite. We get that the natural inclusion $(G_{\alpha}\cap H_{\gamma})/(G_{\alpha+1}\cap H_{\gamma})\to G_{\alpha}/G_{\alpha+1}$ is in fact an epimorphism. Hence, $(G_{\alpha}\cap H_{\gamma})G_{\alpha+1}=G_{\alpha}$ for all $\gamma<\beta$. Using \cite[Proposition 2.1.4]{ribes2000profinite} implies the same for $H_{\beta}$ and we conclude that the natural inclusion $(G_{\alpha}\cap H_{\beta})/(G_{\alpha+1}\cap H_{\beta})\to G_{\alpha}/G_{\alpha+1}$ is an isomorphism, as required. 
		
		Second case: $\beta=\gamma+1$. Then $H_{\beta}\unlhd H_{\gamma}$ implies that $(H_{\beta}\cap G_{\alpha})/(H_{\beta}\cap G_{\alpha+1})\unlhd (H_{\gamma}\cap G_{\alpha})/(H_{\gamma}\cap G_{\alpha+1})\cong G_{\alpha}/G_{\alpha+1}$. The last isomorphism is due to the induction assumption. Now the claim follows by $G_{\alpha}/G_{\alpha+1}$ being simple.
	\end{proof}
	
	Now we are ready to prove the main theorem.
		\begin{proof}[Proof of Proposition \ref{main Theorem}]
			Let $\{G_{\alpha}\}_{\alpha<\mu}$ and $\{H_{\beta}\}_{\beta<\nu}$ be two composition series. Let $S$ be a simple group and $\alpha<\mu $ be some ordinal such that $G_{\alpha}/G_{\alpha+1}\cong S$. For more simplicity denote $G_{\alpha}=A, G_{\alpha+1}=B$. By Lemma \ref{different in pieces} there is some $\beta<\nu $ such that $(A\cap H_{\beta})H_{\beta+1}\ne(B\cap H_{\beta})H_{\beta+1}$. By Lemma \ref{intersection of composition series}, $\{G_{\alpha}\cap H_{\beta}:\alpha<\mu\}$ is an composition series of $H_{\beta}$. Hence, as was explained in the proof of \refeq{well defined composition factors}, its image is a composition series for $H_{\beta}/H_{\beta+1}$. In particular, $((A\cap H_{\beta})H_{\beta+1})/H_{\beta+1}$ and $((B\cap H_{\beta})H_{\beta+1})/H_{\beta+1}$ are subnormal subgroups of the simple group $H_{\beta}/H_{\beta+1}$. We conclude that $(A\cap H_{\beta})H_{\beta+1}=H_{\beta}$ and $(B\cap H_{\beta})H_{\beta+1}=H_{\beta+1}$.
			By the proof of Lemma \ref{intersection of composition series}, either $B\cap H_{\beta}=A\cap H_{\beta}$ or $(A\cap H_{\beta})/(B\cap H_{\beta})\cong A/B$. The first option is impossible since  $ (A\cap H_{\beta})H_{\beta+1}\ne  (B\cap H_{\beta})H_{\beta+1}$. Thus $(A\cap H_{\beta})/(B\cap H_{\beta})\cong A/B$. Now $H_{\beta}/H_{\beta+1}=((A\cap H_{\beta})H_{\beta+1})/(B\cap H_{\beta})H_{\beta+1}$. Look at the natural epimorphism $(A\cap H_{\beta})/(B\cap H_{\beta})\to ((A\cap H_{\beta})H_{\beta+1})/(B\cap H_{\beta})H_{\beta+1}=H_{\beta}/H_{\beta+1}$. Since $(A\cap H_{\beta})/(B\cap H_{\beta})$ is simple, this is in fact an isomorphism. We conclude that $H_{\beta}/H_{\beta+1}\cong S$.

			Now let $\mathcal{S}_1$ be the set of all $\alpha<\mu$ such that $G_{\alpha}/G_{\alpha+1}\cong S$ and $\mathcal{S}_2$ be the set of all $\beta<\nu$ such that $H_{\beta}/H_{\beta+1}\cong S$. We define a map $S_1\to S_2$ by choosing for every $\alpha\in \mathcal{S}_1$ the first $\beta\in \mathcal{S}_2$ such that $(G_{\alpha}\cap H_{\beta})H_{\beta+1}\ne  (G_{\alpha+1}\cap H_{\beta})H_{\beta+1}$.

			By Lemma \ref{different in pieces} such $\beta$ always exists. By the above computations, such $\beta$ satisfies $H_{\beta}/H_{\beta+1}\cong S$ and thus $\beta\in \mathcal{S}_2$. This map is one-to-one. Indeed, assume $\alpha<\gamma\in \mathcal{S}_1$ and $(G_{\alpha}\cap H_{\beta})H_{\beta+1}\ne  (G_{\alpha+1}\cap H_{\beta})H_{\beta+1}$.
			
			In particular, as we computed above, $(G_{\alpha+1}\cap H_{\beta})H_{\beta+1}=H_{\beta+1}$. Since $\gamma\geq \alpha+1$, $$H_{\beta+1}\leq (G_{\gamma}\cap H_{\beta})H_{\beta+1}\leq (G_{\alpha+1}\cap H_{\beta})H_{\beta+1}=H_{\beta+1}$$ The same goes for $G_{\gamma+1}$, so $(G_{\gamma}\cap H_{\beta})H_{\beta+1}=(G_{\gamma+1}\cap H_{\beta})H_{\beta+1}$. In conclusion, $|\mathcal{S}_1|\leq |\mathcal{S}_2|$. By symmetry we conclude that $|\mathcal{S}_2|\leq |\mathcal{S}_1|$ and thus we have an equality. 
			
		\end{proof}
		We end this section by computing the length of a composition series. First we need to define the following invariant of closed subgroups.
		\begin{defn}\label{definition of local weight}
			Let $G$ be a profinite group and $H$ a closed non open subgroup of $G$. We define $\omega_0(G/H)$ to be the cardinality of the set of all open subgroups of $G$ containing $H$. If $H=\{e\}$ then we simply write $\omega_0(G)$. The cardinal $w_0(G/H)$ is in fact equal to the cardinality of any set of open subgroups of $G$ whose intersection is $H$ and which is filtered from below. Indeed, assume $\mathcal{A}$ is such a set. Let $U$ be an open subgroup containing $H$. Since $\bigcap\mathcal{A}=H\subseteq U$ there are some open subgroups $O_1,...,O_n$ in $\mathcal{A}$ such that  $O_1\cap...\cap O_n\subseteq U$. By the filtration property, there is some $O\in \mathcal{A}$ such that $O\leq U$. Hence using the axiom of choice we can define a map from the set of all open subgroups containing $H$ to $\mathcal{A}$ sending an open subgroup $U$ to a open subgroup  $O\in\mathcal{A}$ such that $O\leq U$. Since $O$ is open, this map is finite to 1, so the cardinality of the set of all open subgroups of $G$ containing $H$ is less or equal then the cardinality of $\mathcal{A}$. The converse clearly holds, thus we have an equality. 
		\end{defn}
		\begin{rem}
			The definition of $\omega_0(G/H)$ is different then the definition that was given in \cite[Section 2.6]{ribes2000profinite}. However, one can show that these two definitions are equivalent. 
		\end{rem}
		\begin{lem}\label{lengh of accessible}
			Let $H$ be an infinite-indexed accessible subgroup of $G$. Assume that $G=H_0>...>H_{\lambda}>...>H_{\mu}=H$ is an accessible series from $G$ to $H$ such that all quotients $H_{\lambda}/H_{\lambda+1}$ are finite nontrivial. Then $|\mu|=\omega_0(G/H)$.
		\end{lem}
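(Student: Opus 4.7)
The plan is to prove both inequalities $|\mu| \le \omega_0(G/H)$ and $\omega_0(G/H) \le |\mu|$ separately. Before starting, observe that a finite product of finite group indices stays finite, so $H$ having infinite index forces $\mu$ to be an infinite ordinal and $|\mu|$ an infinite cardinal; this makes the cardinal arithmetic below trivial.

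For $|\mu| \le \omega_0(G/H)$, I would define a map from the set $\mathcal{U}$ of open subgroups of $G$ containing $H$ to the ordinals by
$$\lambda_U := \min\{\lambda \le \mu : H_\lambda \subseteq U\},$$
where the minimum exists because $H_\mu = H \subseteq U$. The crucial point is that $\lambda_U$ is never a limit ordinal: otherwise the nonempty closed sets $H_\beta \setminus U$ for $\beta < \lambda_U$ would have the finite intersection property yet empty total intersection $H_{\lambda_U} \setminus U$, contradicting compactness of $G$. Conversely, every successor $\lambda = \lambda_0 + 1 \in (0,\mu]$ lies in the image: pick $h \in H_{\lambda_0}\setminus H_\lambda$ and, since $H_\lambda$ is closed, choose an open subgroup $U \supseteq H_\lambda$ with $h \notin U$, for which necessarily $\lambda_U = \lambda$. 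Because the successor ordinals in $(0,\mu]$ have cardinality $|\mu|$, this yields $\omega_0(G/H) \ge |\mu|$.

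For $\omega_0(G/H) \le |\mu|$, I would construct, for each $\alpha < \mu$, an open subgroup $U_\alpha \le G$ such that $H_{\alpha+1} \subseteq U_\alpha$ and $U_\alpha \cap H_\alpha = H_{\alpha+1}$. The engine of the construction is the standard fact that $H_{\alpha+1}$, being a closed subgroup of finite index in the profinite group $H_\alpha$, is open in $H_\alpha$. Consequently there is an open neighborhood $V$ of $e$ in $G$ with $V \cap H_\alpha \subseteq H_{\alpha+1}$; shrinking to an open normal subgroup $N_\alpha \unlhd G$ with $N_\alpha \subseteq V$ and setting $U_\alpha := H_{\alpha+1} N_\alpha$ works (normality of $N_\alpha$ gives that $U_\alpha$ is a subgroup, openness of $N_\alpha$ gives openness of $U_\alpha$, and a direct computation using $N_\alpha \cap H_\alpha \subseteq H_{\alpha+1}$ gives $U_\alpha \cap H_\alpha = H_{\alpha+1}$). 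A minimality argument analogous to the one used above shows $\bigcap_{\alpha<\mu} U_\alpha = H$: if some $g \in \bigcap_\alpha U_\alpha$ lay outside $H$, then the least $\lambda$ with $g \notin H_\lambda$ would be a successor $\lambda_0+1$ (limits are excluded because $H_\lambda = \bigcap_{\beta<\lambda} H_\beta$), whence $g \in U_{\lambda_0} \cap H_{\lambda_0} = H_{\lambda_0+1}$, a contradiction.

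Finally, the family $\mathcal{F}$ of all finite intersections of $\{U_\alpha\}_{\alpha<\mu}$ is filtered from below, consists of open subgroups of $G$ containing $H$, satisfies $\bigcap \mathcal{F} = H$, and has cardinality at most $|\mu|$. By the equivalent characterization of $\omega_0(G/H)$ given in Definition \ref{definition of local weight}, $\omega_0(G/H) = |\mathcal{F}| \le |\mu|$, which together with the previous paragraph yields the claim. The two main technical hurdles I expect are the pair of places where a limit ordinal must be ruled out as the minimal witness (the compactness argument in direction one, and the analogous minimality argument in direction two), and the verification that $H_{\alpha+1}$ is open in $H_\alpha$, which is what licenses the construction of the $U_\alpha$.
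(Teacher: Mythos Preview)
Your proof is correct. Both directions rely on the same underlying observation as the paper --- that $H_{\alpha+1}$ is open in $H_\alpha$ and hence can be ``cut out'' by an open subgroup of $G$ --- but the organization differs. For $|\mu|\le\omega_0(G/H)$ the paper goes in the opposite direction from you: it chooses for each $\lambda<\mu$ an open $U\supseteq H$ with $\varphi_{G/U}(H_\lambda)\ne\varphi_{G/U}(H_{\lambda+1})$ and observes that, since the images form a strict chain in the finite set $G/U$, this assignment is finite-to-one. Your map $U\mapsto\lambda_U$ surjecting onto the successors is the dual of this and equally clean. For $\omega_0(G/H)\le|\mu|$ the paper proceeds by transfinite induction on $\lambda$, showing $\omega_0(G/H_\lambda)\le|\lambda|$ at each stage (building, at successor steps, the filtered family $\{VH_{\gamma+1}\cap U\}_{H_\gamma\le U\le_oG}$, and handling limits by a compactness argument). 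Your route is more direct: you build all the $U_\alpha$ at once, verify $\bigcap_{\alpha<\mu}U_\alpha=H$ via the minimal-$\lambda$ argument, and then simply pass to finite intersections to obtain a filtered family of size $\le|\mu|$. This avoids the bookkeeping of the induction and makes the role of Definition~\ref{definition of local weight} more transparent, at the cost of not yielding the intermediate inequalities $\omega_0(G/H_\lambda)\le|\lambda|$ that the paper's induction gives for free.
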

		\begin{proof}
			Clearly $G/H\cong {\invlim}_{H\leq U\leq_oG} G/U$. Here the inverse limit considered as an inverse limit of spaces, as the subgroups are not assumed to be normal. For every $\lambda$ we choose some $U$ such that $\varphi_{G/U}({H_{\lambda}})\ne \varphi_{G/U}({H_{\lambda+1}})$ where $\varphi_{G/U}$ denoted the natural quotient map $G\to G/U$. Since the images $\varphi_{G/U}({H_{\lambda}})$ form a chain of subsets in each $G/U$, which is a finite space, the map adjoining to each $\lambda<\mu$ such $U$ is finite to 1. Hence, $|\mu|\leq \omega_0(G/H)$. 
			
			On the other hand, we will prove by transfinite induction that $\omega_0(G/H_{\lambda})\leq |\lambda|$. The proof is very similar to the proof of \cite[Theorem 2.6.4 (c)]{ribes2000profinite}. For $\lambda=1$ it is obvious. Now assume that $\lambda=\gamma+1$. Since $H_{\gamma+1}$ is an open subgroup of $H_{\gamma}$ then there exists some $V\leq_oG$ such that $V\cap H_{\gamma}\leq H_{\gamma+1}$. It can be shown that $\{VH_{\gamma+1}\cap U\}_{H_{\gamma}\leq U\leq_oG}$ is a set of filtered from below open subgroups whose intersection equals $H_{\gamma+1}$. Indeed, the fact that it is filtered from below comes from the filtration property of the set ${H_{\gamma}\leq U\leq_oG}$. As for the intersection, $$\bigcap_{H_{\gamma}\leq U\leq_oG}(VH_{\gamma+1})\cap U=\bigcap_{H_{\gamma}\leq U\leq_oG}(V\cap U)H_{\gamma+1}$$ $$=H_{\gamma+1}\bigcap_{H_{\gamma}\leq U\leq_oG}(V\cap U)=H_{\gamma+1}(V\bigcap_{H_{\gamma}\leq U\leq_oG}U)=H_{\gamma+1}(V\cap H_{\gamma})=H_{\gamma+1}$$ The second equality is \cite[Proposition 2.1.4]{ribes2000profinite}.
			
			Thus $\omega_0(G/H_{\gamma+1})\leq\omega_0(G/H_{\gamma})\leq|\gamma|=|\gamma+1|$. 
			
			Now assume that $\lambda$ is a limit cardinal.
			Let $U$ be an open subgroup of $G$ containing $H_\lambda$. We have $\cap _{\gamma <\lambda} (H_\gamma \cap (G \backslash U)) =H_\lambda \backslash U =\emptyset$ and from the compactness of $G \backslash U$ we deduce that $H_\gamma \backslash U= \emptyset$ for some $\gamma < \lambda$, i.e. $H_\gamma \leq U$. This shows \[ \{U:H_{\lambda}\leq U\leq_oG\}=\bigcup_{\gamma<\lambda}\{U:H_{\gamma}\leq U\leq_oG\}, \] hence \[ |\{U:H_{\lambda}\leq U\leq_oG\}|\leq \sum_{\gamma<\lambda}|\{U:H_{\gamma}\leq U\leq_oG\}|\leq\sum_{\gamma<\lambda}|\gamma|\leq |\lambda|.\]
		\end{proof}
		\section{Galois Theorem for infinite extensions}
		Now we present a generalization of Galois Theorem for infinite extensions, using composition series of profinite groups. 
		\begin{defn}
			Let $K/F$ be a separable extension.
			\begin{enumerate}
				\item We say that $K$ is prosolvable if $\operatorname{Gal}(N/F)$ is prosolvable, for $N$ being the Galois closure of $K$ over $F$.
				\item We say that $K$ is \textit{solvable by radicals} if there exists a field $L$ containing $K$, an ordinal $\mu$, and a series of field extensions $F=L_0\leq ...\leq L_{\alpha}<...\leq L_{\mu}=L$ which satisfying the following properties:
				\begin{itemize}
					\item For every $\alpha<\mu$, $L_{\alpha+1}/L_{\alpha}$ is a \textit{radical} extension, i.e, $L_{\alpha+1}=L_{\alpha}[\sqrt[n]{a}]$ for some natural number $n$ and $a\in L_{\alpha}$.
					\item For every $\alpha<\mu$ limit, $L_{\alpha}=\bigcup_{\beta<\alpha}L_{\beta}$.
				\end{itemize} 
			\end{enumerate}
		\end{defn}
		\begin{rem}
			Since a quotient of a prosolvable group is prosolvable, $K/F$ is a prosolvable extension if and only if $K$ is contained in some Galois extension $N/F$ such that $\operatorname{Gal}(N/F)$ is prosolvable.
		\end{rem}
		\begin{thm}[Galois Theorem for infinite extensions]
			Let $K/F$ be a separable extension, then $K$ is prosolvable if and only if it is solvable by radicals.
		\end{thm}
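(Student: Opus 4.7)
The plan is to prove each direction separately, reducing in each case to the classical finite Galois theorem for radical extensions together with the composition-series machinery developed above.

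For the forward implication (solvable by radicals implies prosolvable), I let $L$ and the tower $\{L_\alpha\}_{\alpha \leq \mu}$ witness solvability by radicals, and let $N/F$ denote the Galois closure of $K$ inside $\overline{F}$. It suffices to show that every finite Galois subextension $E/F$ of $N/F$ has solvable Galois group. A straightforward transfinite induction on $\alpha$ establishes that any finite subset of $L_\alpha$ is contained in the top field of some finite radical tower $F = L'_0 \subseteq L'_1 \subseteq \cdots \subseteq L'_k$ over $F$: at successor steps, an element of $L_\gamma[\sqrt[n_\gamma]{a_\gamma}]$ involves only finitely many coefficients from $L_\gamma$ (plus $a_\gamma$), to which the induction hypothesis applies; at limit steps $L_\alpha = \bigcup_{\beta < \alpha} L_\beta$ so any finite subset already sits in some $L_\beta$. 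Applying this to a finite generating set of $E$ together with their Galois conjugates shows that $E$ is contained in the Galois closure over $F$ of a finite radical extension, and the classical theorem yields that this closure has solvable Galois group, making $\operatorname{Gal}(E/F)$ a solvable quotient.

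For the reverse direction, I use the remark to fix a Galois extension $N/F$ with $K \subseteq N$ and $\operatorname{Gal}(N/F)$ prosolvable, and then adjoin all roots of unity to obtain $F' = F(\zeta_n : n \in \mathbb{N})$. By a classical induction on $n$ (the abelian Galois group $(\mathbb{Z}/n)^\times$ has a composition series whose prime factors $p$ are strictly less than $n$, so $\zeta_p$ has already been produced at an earlier stage, and each cyclic step then becomes Kummer and hence radical), $F'/F$ admits a radical tower of length $\omega$. Setting $M = N \cdot F'$, the restriction map $\operatorname{Gal}(M/F') \hookrightarrow \operatorname{Gal}(N/F)$ shows that $\operatorname{Gal}(M/F')$ is prosolvable, and by Corollary \ref{equibalent criterion for prosolvable} all its composition factors are cyclic of prime order. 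Fixing such a composition series $G_0 > G_1 > \cdots > G_\nu = \{e\}$ and setting $M_\alpha = M^{G_\alpha}$, the Krull correspondence yields a chain $F' = M_0 \subseteq M_1 \subseteq \cdots \subseteq M_\nu = M$; each successor step is cyclic of prime order $p_\alpha$ and, since $M_\alpha \supseteq F'$ contains a primitive $p_\alpha$-th root of unity, Kummer theory produces $a_\alpha \in M_\alpha$ with $M_{\alpha+1} = M_\alpha[\sqrt[p_\alpha]{a_\alpha}]$. At limit ordinals, a profinite compactness argument (any element $x \in M_\alpha$ has open stabilizer in $\operatorname{Gal}(M/F')$ containing $\bigcap_{\beta < \alpha} G_\beta$, hence containing some individual $G_\beta$ by the finite intersection property applied to the closed complements) gives $M_\alpha = \bigcup_{\beta < \alpha} M_\beta$. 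Concatenating the cyclotomic and Kummer towers produces the desired radical tower containing $K$.

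The main obstacle I anticipate is the transfinite bookkeeping at limit ordinals: verifying the Krull Galois correspondence at limit intersections (that the fixed field of $\bigcap_\beta G_\beta$ is the union of the fixed fields, via profinite compactness) and carefully realizing the cyclotomic extension $F'/F$ as a radical tower in the strict sense permitted by the paper's definition, where only a single $n$-th root may be adjoined at each step.
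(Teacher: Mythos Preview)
Your prosolvable $\Rightarrow$ solvable-by-radicals direction follows essentially the paper's argument (adjoin all roots of unity first, then run a composition series with cyclic factors and apply Kummer theory); the paper is slightly slicker in building the cyclotomic part by simply setting $L_{n+1}=L_n[\sqrt[n!]{1}]$, which sidesteps your induction on $n$, and it takes the composition series in $\operatorname{Gal}(N/F)$ before tensoring up rather than in $\operatorname{Gal}(M/F')$, but these are cosmetic differences.

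For the converse you take a genuinely different route. The paper constructs, by transfinite recursion, an explicit Galois tower $T_\alpha/F$ containing the given radical tower: at each successor step it replaces the single generator $\sqrt[n]{a}$ by all of $\sqrt[n]{\sigma(a)}$ for $\sigma\in\operatorname{Gal}(T_\alpha/F)$, thereby preserving normality throughout and exhibiting an accessible series with abelian factors directly, so that Corollary~\ref{equibalent criterion for prosolvable} applies. Your argument instead reduces to the classical finite theorem by showing (via transfinite induction on $\alpha$) that any finite subset of $L_\alpha$ already sits in a \emph{finite} radical tower over $F$; this is more elementary and in effect establishes at the same time the paper's later corollary that radical extensions are automatically locally radical. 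One small wrinkle to fix: you apply your finite-subset lemma to a generating set of a finite Galois $E\subseteq N$, but those generators lie in $N$, not in $L$. The clean repair is to observe that $E$ is contained in the Galois closure of some finite $K_0\subseteq K\subseteq L$ (since $N$ is the directed union of such closures) and apply your lemma to generators of $K_0$ instead.
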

		\begin{proof}
			First assume that $K/F$ is prosolvable. Let $N$ be the Galois closure of $K$. By assumption, $\operatorname{Gal}(N/K)$ is prosolvable. By Corollary \ref{equibalent criterion for prosolvable} $G=\operatorname{Gal}(N/F)$ admits a composition series $G=G_0\geq...G_{\alpha}\geq ...\geq G_{\mu}=\{e\}$ all its factors are cyclic. Let $N_{\alpha}=N^{G_{\alpha}}$ and $L$ be the compositum of $N$ with $F[\mu_{\infty}]$ where $F[\mu_{\infty}]$ denotes the field extension of $F$ obtained by adjoining to $F$ all $n$-th roots of unity for every natural number $n$. Obviously, $K\leq L$ and we have the following chain of Galois extensions: $F=L_0\leq ...\leq L_{n+1}=L_n[\sqrt[n!]{1}]\leq ...\leq L_{\omega}=F[\mu_{\infty}]\leq L_{\omega+1}=L_{\omega}N_{1}\leq ...\leq L_{\omega+\alpha}=L_{\omega}N_{\alpha}\leq ...\leq L_{\omega+\mu}=L_{\omega}N=L$. Obviously, for every $n$, $L_{n+1}/L_n$ is radical. For every $\alpha$, $L_{\omega+\alpha+1}/L_{\omega+\alpha}$ is cyclic over a field containing all roots of unity and thus radical too. 
			
			For the second direction assume that $K\leq N$ and there is a chain of radical extensions $F=N_0\leq ...\leq N_{\alpha}\leq ...N_{\mu}=N$. Let $L=F[\mu_{\infty}]N$. Consider the following chain of field extensions: $F=L_0\leq ...\leq L_{n+1}=L_n[\sqrt[n!]{1}]\leq ...\leq L_{\omega}=F[\mu_{\infty}]\leq L_{\omega+1}=L_{\omega}N_{1}\leq ...\leq L_{\omega+\alpha}=L_{\omega}N_{\alpha}\leq ...\leq L_{\omega+\mu}=L_{\omega}N=L$. We define a new chain of field extensions $\{T_{\alpha}\}_{\alpha<\omega+\mu}$ as follows: Let $T_0=F$. For every $\alpha<\omega+\mu$ limit define $T_{\alpha}=\bigcup_{\beta<\alpha} T_{\beta}$. For every $\alpha<\omega+\mu$, let $a$ be such that $L_{\alpha+1}=L_{\alpha}[\sqrt[n]{a}]$ and define $T_{\alpha+1}=T_{\alpha}[\sqrt[n]{\sigma(a)}:\sigma\in \operatorname{Gal}(T_{\alpha}/F)]$. We claim that $T_{\alpha}$ is a Galois extension containing $L_{\alpha}$. We prove it by transfinite induction. When $\alpha=0$ this is clear. Assume that $T_{\alpha}$ containing $L_{\alpha}$, then clearly $L_{\alpha+1}=L_{\alpha}[\sqrt[n]{a}]\subseteq T_{\alpha}[\sqrt[n]{\sigma(a)}:\sigma\in \operatorname{Gal}(T_{\alpha}/F)]=T_{\alpha+1}$. Moreover, $T_{\alpha+1}$ is Galois over $F$ as the compositum of $T_{\alpha}$ with the splitting field over $F$ of the polynomial $f_a(x^n)$, where $f_a$ is the minimal polynomial of $a$ over $F$. When $\alpha$ is a limit ordinal the claim is immediate. Hence $T_{\omega+\mu}$ is a Galois extension containing $K$. In addition, for every $\alpha<\omega+\mu$, $LT_{\alpha+1}/T_{\alpha}$ is abelian. Let $G_{\alpha}=\operatorname{Stab}_{\operatorname{Gal}(L/F)}(T_{\alpha})$, we get that $G_{\alpha}$ is an accessible series for $\operatorname{Gal}(L/F)$, $G_{\alpha+1}/G_{\alpha}$ is abelian and $\bigcap_{\alpha<\omega+\mu}G_{\alpha}=\{e\}$. We conclude from Corollary \ref{equibalent criterion for prosolvable} that $\operatorname{Gal}(L/F)$ is prosolvable.
		\end{proof}
		\begin{defn}
			Let $K/F$ be a separable algebraic extension.
			\begin{enumerate}
				\item $K/F$ is called \textit{locally solvable} if every finite subextension $L/F$ of $K/F$ is solvable.
				\item $K/F$ is called \textit{locally solvable by radicals} if every finite subextension $L/F$ of $K/F$ is solvable by radicals.
			\end{enumerate}
		\end{defn}
		\begin{rem}
			Observe that by Galois Theorem for finite extensions, $K/F$ is locally solvable if and only if it is locally solvable by radicals. 
		\end{rem}
		\begin{lem}
			Let $K/F$ be a separable algebraic extension, then $K/F$ is locally solvable if and only if it is prosolvable. Consequently, $K/F$ is locally solvable by radicals if and only if it is solvable by radicals.
			
		\end{lem}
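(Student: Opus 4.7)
The plan is to pass through the Galois closure $N$ of $K/F$ and identify prosolvability of $G := \operatorname{Gal}(N/F)$ with local solvability of $K/F$ via the correspondence between finite quotients of $G$ and finite Galois subextensions of $N/F$. For the forward direction (locally solvable $\Rightarrow$ prosolvable), I would take an arbitrary finite Galois subextension $M/F$ of $N/F$ and show that $\operatorname{Gal}(M/F)$ is solvable. The key observation is that $M$ is generated over $F$ by finitely many elements of $N$, and each such element is algebraic over $F$ with conjugates in $N$; since $N$ is the Galois closure of $K$, one can choose a finite subextension $L/F$ of $K/F$ whose Galois closure $\tilde{L}$ inside $N$ contains $M$. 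By local solvability, $\operatorname{Gal}(\tilde{L}/F)$ is solvable, and $\operatorname{Gal}(M/F)$, being a quotient of it, is also solvable. Since every finite quotient of $G$ is of the form $\operatorname{Gal}(M/F)$ for some such $M$, $G$ is prosolvable.

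For the reverse direction (prosolvable $\Rightarrow$ locally solvable), I would take a finite subextension $L/F$ of $K/F$ and consider its Galois closure $\tilde{L}$ over $F$. Because $L \subseteq K \subseteq N$ and $N/F$ is Galois, all $F$-conjugates of elements of $L$ lie in $N$, so $\tilde{L} \subseteq N$. Then $\operatorname{Gal}(\tilde{L}/F)$ is a finite quotient of $G$, hence solvable by the prosolvability assumption. Therefore $L/F$ is solvable, establishing local solvability of $K/F$.

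For the consequence, I would combine the equivalence just established with the Galois Theorem for infinite extensions proved above (prosolvable $\iff$ solvable by radicals) and the earlier remark that local solvability coincides with local solvability by radicals (by the classical finite Galois theorem). Chaining these equivalences gives locally solvable by radicals $\iff$ locally solvable $\iff$ prosolvable $\iff$ solvable by radicals.

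The main obstacle will be the forward direction: one must verify carefully that every finite Galois subextension of $N/F$ is captured by the Galois closure of some finite subextension of $K/F$. This is essentially the statement that $N$ is the directed union of the Galois closures of the finite subextensions of $K$, which follows from the definition of the Galois closure, but the argument needs to be written out to justify that a given $M \subseteq N$ really is contained in $\tilde{L}$ for a suitable $L \subseteq K$. The rest of the argument is then a direct application of the standard correspondence between finite quotients of the profinite Galois group and finite Galois subextensions.
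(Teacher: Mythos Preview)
Your proposal is correct and follows essentially the same route as the paper: both directions pass through the Galois closure $N/F$, using that $\tilde L \subseteq N$ for any finite $L \subseteq K$ (giving prosolvable $\Rightarrow$ locally solvable) and that $N$ is the directed union of such $\tilde L$ so that $\operatorname{Gal}(N/F) = \varprojlim \operatorname{Gal}(\tilde L/F)$ is an inverse limit of solvable groups (giving locally solvable $\Rightarrow$ prosolvable). Your explicit identification of the ``main obstacle'' is exactly the step the paper handles by the one-line assertion $N = \bigcup \tilde L$, and the consequence is deduced identically by chaining with the infinite Galois theorem and the finite-case remark.
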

		
		\begin{proof}
			First assume that $K/F$ is prosolvable. Let $N/F$ be the Galois closure of $K$ and $L/F$ be some finite subextension of $K$. Then $L'/F$, the Galois closure of $L/F$, is embedded into $N/F$ and there is an epimorphism $\operatorname{Gal}(N/F)\to \operatorname{Gal}(L'/F)$. Hence, $\operatorname{Gal}(L'/F)$ is solvable. On the other hand, if $N/F$ is the Galois closure of $K/F$, then $N=\bigcup L'/F$ the union of the Galois closures of all finite subextensions $L/F$ inside a given separable closure of $F$. Thus $\operatorname{Gal}(N/F)=\invlim \operatorname{Gal}(L'/F)$ is prosolvable.
		\end{proof}
		\begin{cor}
			Any radical extension of $F$ is locally radical. In other words, if a separable element $x$ over $F$ can be obtained by "$\mu$ steps" of extracting $n$'th roots for some ordinal $\mu$ and natural numbers $n$, then $x$ can be obtained by finite steps of extracting $n$'th roots. 
		\end{cor}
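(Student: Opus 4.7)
The plan is to derive the corollary by chaining together the preceding results. Let $K/F$ be a radical extension of $F$ in the transfinite sense: that is, $K$ itself arises as the top of a tower $F = L_0 \leq \cdots \leq L_\mu = K$ of the form prescribed in the definition of solvability by radicals. Taking $L = K$ in that definition shows $K/F$ is in particular solvable by radicals. Given a separable element $x \in K$, the goal is to exhibit a \emph{finite} tower of radical extensions above $F$ that contains $x$.

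First I would invoke the Galois theorem for infinite extensions to conclude that $K/F$ is prosolvable. Next, the preceding lemma identifies prosolvable separable extensions with locally solvable ones, so every finite subextension of $K/F$ is solvable in the classical sense. In particular, $F(x)/F$ is a finite separable solvable extension.

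Finally, the classical Galois theorem for finite extensions (recalled in the preceding remark) says that any finite separable solvable extension is solvable by radicals in the finite sense, i.e., contained in a finite tower $F = M_0 \leq M_1 \leq \cdots \leq M_n$ with each $M_{i+1} = M_i[\sqrt[k_i]{a_i}]$. Applied to $F(x)/F$, this produces the desired finite sequence of $n$'th-root extractions yielding $x$.

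There is no substantive obstacle: all the nontrivial work has been done in the preceding theorem and lemma, and the corollary is essentially the reformulation ``radical $\Rightarrow$ solvable by radicals $\Leftrightarrow$ prosolvable $\Leftrightarrow$ locally solvable $\Leftrightarrow$ locally solvable by radicals'' specialized to the finite subextension $F(x)/F$. The only care required is to confirm that the output of classical finite Galois theory is genuinely a \emph{finite} radical tower rather than a transfinite one, but this is precisely its content.
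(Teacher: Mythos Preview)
Your argument is correct and matches the paper's intent: the corollary is stated without proof precisely because it follows by the chain you describe, namely radical $\Rightarrow$ solvable by radicals $\Leftrightarrow$ prosolvable $\Leftrightarrow$ locally solvable $\Leftrightarrow$ locally solvable by radicals, applied to the finite subextension $F(x)/F$.
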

		
		\section{Topological vs. Abstract Composition Factors}
		In this section we study the connection between the composition factors of a profinite group and its profinite completion. Recall that the profinite completion of an abstract group $G$, which we denote by $\hat{G}$, is equal to the inverse limit of all its finite quotients with the natural projections between them. Then $\hat{G}=\varprojlim_{N\unlhd _fG}G/N$. Observe that the finite topological quotients of $\hat{G}$ are precisely the finite abstract quotients of $G$. For more information about profinite completions see \cite[Section 3.2]{ribes2000profinite}.
		
		This definition can in particular be applied for profinite groups $G$. Clearly, $G=\hat{G}$ if and only if all finite-index (normal) subgroups of $G$ are open in $G$. Such groups are called \textit{strongly complete} groups. In that case, properties of the complete system of finite abstract quotients of $G$ are reflected in properties of the
		topological group $G$.
		
		Although it seems a-priory unlikely to hold in general for all profinite groups- it turns out that some topological properties of a general profinite group are in fact properties of the complete system of its finite abstract quotients, or equivalently, are preserved under profinite completion.
		
		Some basic examples are being pro-$\pi$ (prosolvable). By \cite[Proposition 4.2.3 and Corollary 4.2.4]{ribes2000profinite}, every abstract finite quotient of a pro-$\pi$ (prosolvable) group is a finite $\pi$-group (solvable group). Here $\pi$ denotes a set of primes and by (pro-)$\pi$ groups we refer to (inverse limits of) finite groups whose orders are only divided by primes for $\pi$. In particular, the profinite completion of a pro-$\pi$ (prosolvable) group is again a pro-$\pi$ (prosolvable) group. Other interesting connections between a profinite group and its profinite completion can be found, for example, in \cite{bar2024profinite}, \cite{bar2022profinite} and \cite{bar2024cohomological}. The importance of understanding those connections between $G$ and $\hat{G}$ lies in the fact that the profinite completion of a profinite group encodes, in some sense, the abstract group-theoretic properties of the group which are not immediate topological properties. Understanding these abstract group-theoretic properties of a profinite group helps us moving toward solution of the following question that was posed in \cite{bar2024groups}:
		\begin{question}
			Which abstract groups can be realized as Galois groups?
		\end{question}
		By the famous correspondence due to Krull and Leptin, this question is equivalent to: what abstract group can be given a profinite group-topology?
		
		We say that a finite group is \emph{anabelian} if all its composition factors are non-abelian. Let us say that a profinite group is anabelian if all its composition factors are non-abelian. By Proposition \ref{well defined composition factors} this is equivalent to being an inverse limit of anabelian finite groups. Inspired by the prosolvable case we suggest the following question:
		
		\begin{question}\label{question 1}
			Let $G$ be an anabelian profinite group. Is $\hat{G}$ an anabelian profinite group as well?
		\end{question}
		
		Some evidence for a positive answer might be found in the following proposition:
		\begin{prop}
			Let $G$ be an anabelian profinite group. Then $G$ has no finite abstract abelian quotients.
		\end{prop}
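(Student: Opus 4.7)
My plan is to show that any finite-index normal subgroup $N \trianglelefteq G$ with $G/N$ abelian must satisfy $N = G$. The essential preliminary observation is that a finite anabelian group $H$ has no nontrivial abelian quotient: if $H/K$ were abelian and nontrivial then its composition factors are all cyclic of prime order, and refining any composition series of $K$ to one of $H$ through $K$ exhibits those abelian factors among the composition factors of $H$, contradicting the anabelian hypothesis.

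Combining this with Proposition \ref{well defined composition factors}, every finite topological quotient $G/U$ (with $U$ an open normal subgroup of $G$) inherits anabelianness from $G$ and is therefore perfect, yielding $[G,G] \cdot U = G$ for every such $U$. Since $G/N$ is abelian we have $[G,G] \subseteq N$, so $NU \supseteq [G,G] \cdot U = G$, i.e.\ $N$ meets every open coset and is therefore dense in $G$. The closure $\overline{N}$ is then a closed normal subgroup of finite index in $G$, necessarily open by compactness, and $G/\overline{N}$ is a finite topological quotient of $G$ that is abelian (as a quotient of $G/N$). Applying the preliminary observation to the finite anabelian group $G/\overline{N}$ forces $\overline{N} = G$, i.e.\ $N$ is dense in $G$.

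The main obstacle is the upgrade from $\overline{N} = G$ to the equality $N = G$ demanded by the proposition. A proper dense normal subgroup of finite index can exist in a general profinite group (for instance, the kernel of an ultrafilter-limit homomorphism $\prod_{n} \mathbb{Z}/p \to \mathbb{Z}/p$), so topological closure arguments alone cannot bridge the gap. To close it I would exploit that anabelianness is strictly stronger than topological perfectness (witness $2.A_5$, which is perfect but not anabelian): for every $g \in G$ and every open normal $U$, the image of $g$ in the perfect group $G/U$ is a product of commutators, placing $g$ in $[G,G] \cdot U \subseteq N \cdot U$. A uniform bound on the number of commutators needed across all $U$ — in the spirit of the Nikolov--Segal results on commutator widths in profinite groups — would, by a compactness argument, realise $g$ itself as a product of boundedly many commutators and hence place $g \in [G,G] \subseteq N$, forcing $N = G$. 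Establishing such uniformity, or devising a direct verbal/compactness argument tailored to the anabelian hypothesis, is where the technical core of the proof resides.
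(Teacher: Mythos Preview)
Your approach is essentially identical to the paper's: both reduce the statement to $G=[G,G]$, observe that $\overline{[G,G]}=G$ because every finite topological quotient is perfect, and then isolate the passage from $\overline{[G,G]}=G$ to $[G,G]=G$ as the crux, to be handled by a uniform bound on commutator width in finite anabelian groups. The paper completes exactly the step you flag as the ``technical core'' by invoking \cite{nikolov2016verbal}, which supplies a constant $D$ such that every element of a finite anabelian group is a product of $D$ commutators; combined with the general fact (recalled from \cite{nikolov2007finitely}) that $w(G)$ is closed in a profinite group $G$ iff the $w$-width of all its finite topological quotients is uniformly bounded, this yields $[G,G]=\overline{[G,G]}=G$ immediately.

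One minor redundancy in your write-up: once you have shown $NU=G$ for every open normal $U$, you already have $\overline N=G$, so the subsequent paragraph invoking ``$G/\overline N$ is a finite anabelian abelian quotient'' is superfluous. Also note that the argument actually proves the stronger conclusion $G=[G,G]$ (no nontrivial abstract abelian quotients whatsoever, finite or not), as the paper remarks.
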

		\begin{proof}
			Since $G$ is anabelian it has no topological finite abelian quotients. Hence there is no proper open subgroup ${[G,G]}\leq U\unlhd_o G$. In particular there is no proper open subgroup $\overline{[G,G]}\leq U\unlhd_o G$. Recall that every closed normal subgroup of $G$ is the intersection of all open normal subgroups containing it, see \cite[Proposition 2.1.4 (d)]{ribes2000profinite}. We get that $\overline{[G,G]}=G$.
			
			We will show a stronger result: in fact, $G$ has no abelian quotients at all. Equivalently: $G=[G,G]$. By the preceding discussion it is enough to show that $[G,G]=\overline{[G,G]}$.
			For that we briefly recall some observations that were made in  \cite[Pg. 175]{nikolov2007finitely}.
			
			\sloppy
			Let $w=w(x_1,...,x_k)$ be a group word. Denote by $G^{\{w\}}=\{w(g_1,...,g_k)^{\pm 1}|g_1,...,g_k\in G\}$, $(G^{\{w\}})^{*n}=\{s_1\cdots s_n|s_1,...,s_n\in G^{\{w\}}\}$ and $w(G)=\bigcup_{n\in \mathbb{N}}  (G^{\{w\}})^{*n}$ the abstract subgroup that is generated by all substitutions of elements from $G$ in $w(x_1,...,x_k)$. Then $w(G)$ is closed if and only if there exists some $n\in \mathbb{N}$ such that $w(G)= (G^{\{w\}})^{*n}$. Moreover, $w(G)= (G^{\{w\}})^{*n}$ if and only if $w(G/N)=(G/N^{\{w\}})^{*n}$ for every open normal subgroup $N\unlhd_oG$. 
			
			By \cite[Theorem 2]{nikolov2016verbal} there exists some constant $D$ such that every element in an anabelian finite group is a product of $D$ commutators. By definition of an anabelian profinite group, we are done.  
		\end{proof}
		It is worth mentioning that a profinite group might have finite abstract quotients that does not appear as topological quotients, as described in the following example from \cite[Pg. 176]{nikolov2007finitely}.
		\begin{exam}
			In \cite[Lemma 2.2]{holt1989enumerating}  the author constructed for every prime $p$ and $q$ a power of $p$ a sequence of finite perfect groups $K_n$ such that for each $n$, $K_n\cong P_n\rtimes SL_2(q)$ for $P$ a finite $p$-group, and $\log|K_n|/\log|K_n^{\{w\}}\to \infty$ for $w=x^2$. Let $G=\prod_n K_n$. Then by \cite[Pg. 176]{nikolov2007finitely} $G=\overline{[G,G]}$ and thus has no topological finite abelian quotients. However, since $G^2\lneq G$, $C_2$ appears as an abstract finite quotient of $G$. One observes that $C_2$ does appear as a (topological) composition factor of $G$, since $C_2\cong Z(SL_2(q))$.
		\end{exam}
		This raises a question regarding the possible connection between the \textit{abstract} and \textit{topological} composition factors of a profinite group $G$. Let $S$ be finite simple group. We say that $S$ is an \textit{abstract} (respectively \textit{topological}) composition factor of $G$ if it appears as a composition factor of some abstract (respectively topological) finite quotient of $G$.
		\begin{question}\label{topological vs abstract composition factors}
			Does every abstract composition factor of a profinite group $G$ also occur as a topological composition factor of $G$?
		\end{question}
		Since the abstract finite quotients of a profinite group are precisely the topological finite quotients of its profinite completion, by Proposition \ref{well defined composition factors} Question \ref{topological vs abstract composition factors} is equivalent to the following:
		\begin{myquestion}{\ref{topological vs abstract composition factors}*}
			Let $G$ be a profinite group. Do $G$ and $\hat{G}$ have the same composition factors?
		\end{myquestion}
		Although Question \ref{topological vs abstract composition factors} remains widely open, we can prove some partial results. The first one says that a group with large topological composition factors cannot have an abstract finite image which is too small.
		
		\begin{prop} \label{large} There is a function $f: \mathbb N \rightarrow \mathbb N$ with the following property. Suppose that $L$ is an abstract quotient of a profinite group $G$. Then $G$ has a topological composition factor $L_0$ with $|L_0|< f(|L|)$. 
		\end{prop}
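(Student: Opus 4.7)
The plan is to use a theorem of Nikolov--Segal stating that the abstract verbal subgroup $G^n$ is closed in every compact Hausdorff topological group (for the specific case of power words, this follows from their analysis of verbal width combined with Zelmanov's positive solution of the restricted Burnside problem), together with the classification of finite simple groups.

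Given the abstract surjection $\varphi : G \to L$ with $n := |L|$ and kernel $K$, I first note that by Lagrange every element of $L$ satisfies $l^n = 1$, so $g^n \in K$ for every $g \in G$. Hence the abstract subgroup $G^n = \langle g^n : g \in G \rangle$ is contained in $K$, and in particular is proper in $G$. Applying the Nikolov--Segal theorem, $N := G^n$ is closed in $G$, so $G/N$ is a nontrivial profinite group of exponent dividing $n$.

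By Proposition \ref{well defined composition factors}, every topological composition factor of $G/N$ is a topological composition factor of $G$. Since $G/N$ has exponent dividing $n$, each such factor is a finite simple group of exponent at most $n$. By the classification of finite simple groups only finitely many such simples exist for each $n$: abelian ones are $C_p$ with $p \leq n$; alternating groups $A_m$ of bounded exponent force $m$ bounded (as a cycle of length close to $m$ appears in $A_m$); finite simple groups of Lie type of bounded exponent have bounded characteristic and rank; and there are only $26$ sporadic groups. Letting $g(n)$ denote the maximum order among finite simple groups of exponent at most $n$, we obtain a composition factor $L_0$ of $G$ with $|L_0| \leq g(n)$, and we may take $f(n) = g(n)+1$.

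The main obstacle is the invocation of the Nikolov--Segal closedness of $G^n$ for an arbitrary (not necessarily topologically finitely generated) profinite group, a deep result ultimately dependent on Zelmanov's theorem. Without it, the more elementary approach of pulling back a composition series of $L$ to an abstract subnormal series $G = H_0 \unrhd \cdots \unrhd H_m = K$ in $G$ and taking closures provides a topological composition factor of order $\leq n$ whenever some $\overline{H_i} \neq \overline{H_{i+1}}$, but fails precisely when $K$ is dense in $G$, for then every $\overline{H_i}$ equals $G$.
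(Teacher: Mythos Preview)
Your proof rests on the claim that $G^n$ is closed in every profinite (or compact Hausdorff) group $G$, which you attribute to Nikolov--Segal. This claim is false, and the very example discussed in the paper (due to Holt) furnishes a counterexample. Let $G=\prod_n K_n$ where the $K_n$ are finite perfect groups with $\log|K_n|/\log|K_n^{\{x^2\}}|\to\infty$. Since each $K_n$ is perfect one has $K_n^2=K_n$, so the restricted direct sum $\bigoplus_n K_n$ lies inside the abstract subgroup $G^2$; as this sum is dense, $\overline{G^2}=G$. On the other hand the unbounded $x^2$-width in the $K_n$ forces $G^2\neq G$ (pick in each coordinate an element requiring more and more squares). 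Hence $G^2$ is not closed. The Nikolov--Segal results you have in mind prove closedness of verbal subgroups only under a topological finite-generation hypothesis on $G$; Zelmanov's theorem bounds the order of $d$-generator groups of exponent $n$, but does not yield a uniform bound on $x^n$-width over all finite groups, which is what would be needed.

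The paper's proof avoids this obstacle by establishing not that $G^q$ is closed in general, but that $G^q=G$ \emph{under the extra hypothesis} that every topological composition factor of $G$ has order at least $f(q)$. Arguing by contradiction and reducing to finite quotients, one inducts on $|G|$: a minimal normal subgroup $N$ is either semisimple with large simple factors, where Proposition~10.1 of \cite{nikolov2007finitely} gives the required product-of-$q$th-powers representation modulo $N$, or elementary abelian of prime order $p>q$, where one absorbs the $N$-coset into an element of order coprime to $q$. One obtains that every element of $G$ equals $g_0 g_1^q\cdots g_m^q$ with $o(g_0)$ coprime to $q$; since such $g_0$ is itself a $q$th power in the procyclic group it generates, $G=G^q$, contradicting the existence of the abstract quotient $L$ of order $q$. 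Your overall strategy---force $G=G^q$---is the same in spirit, but the largeness hypothesis on the composition factors is precisely what makes the width argument go through.
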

		
		\begin{proof} 
			Let $q=|L|$. By Proposition 10.1 of \cite{nikolov2007finitely} there is are integers $C=C(q)$ and $m=m(q)$ with the following property: 
			
			Let $\Gamma$ be a finite group with a semisimple normal subgroup $N$ which is a product of simple groups, each of size at least $C$. For any $g_1, \ldots, g_m$ in $G$ we have 
			\begin{equation}\label{powers} \{ (g_1z_1)^q \cdots (g_mz_m)^q  \ | \ z_i \in N , i=1, \ldots, m \} =g_1^qg_2^q \cdots g_m^q N. \end{equation}
			
			Set $f=f(|L|):=\max \{ C(q),q+1\}$ and suppose that every topological composition factor of $G$ has size at least $f$. We claim that \begin{equation} \label{claim} G=\left \{ g_0 g_1^q g_2^q \cdots g_m^q \ | \ g_i \in G \textrm{ and } o(g_0) \in \hat{\mathbb Z} \textrm{ is coprime to } q \right \}.\end{equation} 
			
			Assuming this claim it follows that $G=G^q$ and since $L^q=\{1\}$ we conclude that $L$ cannot be an abstract quotient of $G$, contradiction. Hence $G$ must have a topological composition factor of size at least $f$.
			
			It remains to prove the (\ref{claim}). The right hand side is a closed subset of $G$ and thus it is enough to show that it is dense in $G$. Therefore we may assume from now on that $G$ is a finite group. We argue by induction on $|G|$ starting with the case when $|G|=1$ when the equality (\ref{claim}) is trivially true. If $G$ is simple we set $N=G$, otherwise we choose $N$ to be a minimal normal subgroup of $G$. Let $x \in G$. By the induction hypothesis there are elements $g_0, \ldots, g_m \in G$ with order of $g_0N$ in $G/N$ coprime to $q$ and such that $x \equiv g_0g_1^q \cdots g_m^q$ mod $N$. If $N$ is semisimple then its simple factors are of size at least $C(q)$ by assumption. In that case the existence of $z_1, \ldots, z_m \in N$ with $x=g_0 (g_1z_1)^q \cdots (g_mz_m)^q$ follows directly from (\ref{powers}). If $N$ is elementary abelian then it must be a $p$-group for some prime $p>q$ and in particular $|N|$ is coprime to $q$. In that case any element of $g_0N \subset G $ has order in $G$ coprime to $q$ and we can write $x=g_0'g_1^q \cdots, g_m^q$ where $g_0':=x(g_1^q \cdots, g_m^q)^{-1} \in g_0N$ as required. The claim (\ref{claim}) is proved.
		\end{proof}
		
		The next Proposition shows that if $L$ is an abstract composition factor of $G$ then the topological composition factors of $G$ cannot be all smaller than $L$. Recall that for groups $A,B$ we say that $A$ is a section of $B$ if $A \cong C/D$ where $D \leq C \leq B$ with $D$ normal in $C$. It is easy to show that if $H$ is a closed subgroup of a profinite group $G$ then every topological composition factor of $H$ occurs as a section of some topological composition factor of $G$.
		
		\begin{prop} \label{small} Let $L$ be an abstract composition factor of a profinite group $G$. Then $G$ has a topological composition factor $L_0$ such that $L$ is a section of $L_0$.
		\end{prop}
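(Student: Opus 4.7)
The plan is to show that $L$ sits as a section of a topological composition factor of $G$ by analyzing how $L$ arises from a composition series of $G/N$ lifted to $G$. Fix a finite-index normal subgroup $N \unlhd G$ such that $L$ is a composition factor of $G/N$, and pull back a composition series of $G/N$ to a chain $G = B_0 \unrhd B_1 \unrhd \cdots \unrhd B_k = N$ in $G$ with $B_j/B_{j+1} \cong L$. The argument hinges on a dichotomy according to whether the closures $\bar{B}_j$ and $\bar{B}_{j+1}$ in $G$ coincide (both are closed, and $\bar{B}_{j+1} \unlhd \bar{B}_j$ because taking closures preserves normality).

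If $\bar{B}_j \neq \bar{B}_{j+1}$, I would set $D = \bar{B}_{j+1} \cap B_j$, a normal subgroup of $B_j$ containing $B_{j+1}$, and use the simplicity of $B_j/B_{j+1}$ to conclude $B_j/D \cong L$ (since the alternative $B_j/D = \{e\}$ would force $\bar{B}_j = \bar{B}_{j+1}$). The subgroup $D$ is closed of finite index in $B_j$ in the subspace topology, hence is relatively open, so one can choose an open normal subgroup $U \unlhd_o G$ with $B_j \cap U \leq D$. A direct calculation then gives $B_j U/DU \cong L$, exhibiting $L$ as a simple section of the finite topological quotient $G/U$. Since a simple section of a finite group is always a section of one of its composition factors (a standard Jordan--H\"older style argument obtained by intersecting with a composition series and projecting), and the composition factors of $G/U$ are topological composition factors of $G$ by Proposition~\ref{well defined composition factors}, this produces the desired $L_0$.

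If instead $\bar{B}_j = \bar{B}_{j+1}$, then $L$ is topologically invisible at the $j$-th step, and I would reduce to a strictly smaller closed subgroup. Grouping the chain of closures into bands of constant value, write $G = \bar{B}_{i_0} \supsetneq \bar{B}_{i_1} \supsetneq \cdots \supsetneq \bar{B}_{i_r} = \bar{N}$, and let $s$ be the band containing $j$. Since $B_i$ is dense in $\bar{B}_{i_s}$ for every $i$ in the band and $\bar{B}_{i_{s+1}}$ is open in $\bar{B}_{i_s}$ (being closed of finite index), one obtains $B_i \bar{B}_{i_{s+1}} = \bar{B}_{i_s}$ for all $i$ in the band. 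A short diagram chase then yields the key isomorphism
\[
\frac{B_j \cap \bar{B}_{i_{s+1}}}{B_{j+1} \cap \bar{B}_{i_{s+1}}} \;\cong\; \frac{B_j}{B_{j+1}} \;\cong\; L,
\]
so $L$ is an abstract section of the strict closed subgroup $\bar{B}_{i_{s+1}} \subsetneq G$; taking the normal core of $B_{j+1} \cap \bar{B}_{i_{s+1}}$ inside $\bar{B}_{i_{s+1}}$ upgrades this to an abstract composition factor. I then iterate: at every level we are either in the first case (done, combined with the paper's remark that topological composition factors of a closed subgroup are sections of those of $G$) or we descend strictly to a smaller closed subgroup.

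The main obstacle will be justifying that the resulting transfinite chain of closed subgroups $G = H_0 \supsetneq H_1 \supsetneq \cdots$ (with $H_\lambda = \bigcap_{\alpha<\lambda} H_\alpha$ at limit ordinals) must terminate in the first case. Each descending step is only to an open subgroup of finite index, so coarse invariants like the local weight $\omega_0$ need not strictly decrease at successor stages. The natural approach is transfinite descent: the chain cannot stabilize without falling into the first case, and it cannot reach the trivial subgroup since then $L$ would be a composition factor of $\{e\}$. Bounding the ordinal length of such a chain of closed subgroups in a profinite group, and exhibiting an invariant that strictly decreases at every stage, is the delicate step on which the argument ultimately rests.
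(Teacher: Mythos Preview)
Your Case~1 is correct, and the diagram chase in Case~2 does produce a proper open subgroup $H_1 \lneq G$ in which $L$ persists as an abstract composition factor. The gap you yourself flag is, however, genuine and not a technicality to be filled in later. If Case~2 recurs at every stage you obtain a strictly decreasing chain $G = H_0 \supsetneq H_1 \supsetneq \cdots$ of open subgroups, but nothing forces $L$ to remain an abstract composition factor of the closed intersection $H_\omega = \bigcap_n H_n$: abstract finite-index normal subgroups do not behave well under intersections of a descending family of closed subgroups, and none of the invariants at hand (index in $G$, local weight $\omega_0$, etc.) strictly decreases at each successor step. The chain cannot ``reach $\{e\}$'' without first passing through limit ordinals at which the hypothesis on $L$ has been lost, so the transfinite induction does not close. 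I do not see an elementary repair.

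The paper bypasses the descent entirely. Choose $a_1,\ldots,a_d \in G$ whose images generate the finite group $G/N$, and set $H := \overline{\langle a_1,\ldots,a_d\rangle}$. Then $HN = G$, so $H/(H\cap N) \cong G/N$ and $L$ is an abstract composition factor of $H$. But $H$ is a \emph{finitely generated} profinite group, hence strongly complete by the Nikolov--Segal theorem; thus $H \cap N$ is automatically open in $H$ and $L$ is already a \emph{topological} composition factor of $H$. The remark preceding the proposition (every topological composition factor of a closed subgroup is a section of one of $G$) then finishes. The entire weight is carried by strong completeness; your attempt to avoid a deep input of this kind is precisely what leaves the termination unresolved.
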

		\begin{proof}
			Let $N$ be a normal subgroup of finite index in $G$ such that  $L$ is a composition factor of $G/N$. Choose $a_1, \ldots, a_d \in G$ such that $a_1N, \ldots, a_dN$ generate $G/N$ and let $H:=\overline{\langle a_1, \ldots, a_d \rangle} \leq G$. Note that $HN=G$ and hence $G/N \simeq H/(N \cap H)$. On the other hand $H$ is a finitely generated profinite group and hence strongly complete by \cite{nikolov2007finitely}. Therefore $L$ occurs as a topological composition factor of $H$ and hence occurs a section in some topological composition factor of $G$.   
		\end{proof}
		
		In particular since all proper subgroups of the alternating group $A_5$ are solvable we can deduce the following.
		\begin{cor} Let $G$ be a profinite group whose nonabelian topological composition factors are equal to $A_5$. Then the nonabelian abstract composition factors of $G$ must also be equal to $A_5$.
			
		\end{cor}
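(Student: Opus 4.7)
The plan is to apply Proposition \ref{small} directly and then analyze the sections of $A_5$ using the fact that its proper subgroups are solvable.

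First, let $L$ be any nonabelian abstract composition factor of $G$. By Proposition \ref{small}, there is a topological composition factor $L_0$ of $G$ such that $L$ is a section of $L_0$, i.e.\ $L \cong C/D$ with $D \trianglelefteq C \leq L_0$. Since $L$ is nonabelian (in particular non-solvable), any group of which $L$ is a section must be non-solvable, so $L_0$ is nonabelian. By hypothesis, every nonabelian topological composition factor of $G$ is $A_5$, so $L_0 = A_5$.

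Next, I would use the hint about subgroups of $A_5$: since every proper subgroup of $A_5$ is solvable, either $C = A_5$ or $C$ is solvable. If $C$ were solvable then the quotient $L = C/D$ would also be solvable, contradicting that $L$ is nonabelian simple. Hence $C = A_5$. Then $D$ is a normal subgroup of the simple group $A_5$, so $D = \{e\}$ (the case $D = A_5$ gives $L$ trivial, not simple), and therefore $L \cong A_5$.

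There is no real obstacle here; the statement is essentially a bookkeeping consequence of Proposition \ref{small} combined with the elementary group-theoretic fact that the only nonabelian simple section of $A_5$ is $A_5$ itself. The only thing to double-check is that the ``nonabelian composition factor'' hypothesis is being used in the right direction, namely to rule out $L_0$ being one of the (cyclic) abelian topological composition factors of $G$, which the section argument handles since a nonabelian simple group cannot be a section of a cyclic group.
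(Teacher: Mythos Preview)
Your argument is correct and is exactly the proof the paper intends: the corollary is stated immediately after the remark that all proper subgroups of $A_5$ are solvable, and is left as a direct consequence of Proposition \ref{small} together with that fact. Your write-up simply spells out the details of this deduction.
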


		\bibliographystyle{plain}

\begin{thebibliography}{10}
			
			\bibitem{ribes2000profinite}
			Ribes, L., Zalesskii, P.
			\newblock {\em Profinite groups}.
			\newblock Springer, 2010.
			
			\bibitem{Birkhoff1934Transfinite}
			Birkhoff, G.
			\newblock Transfinite subgroup series.
			\newblock {\em Bulletin of the American Mathematical Society}, 40 (12): 847–850, 1934.
			
			\bibitem{Baumslag2006simple}
			Baumslag, B.
			\newblock A simple way of proving the Jordan-Hölder-Schreier theorem.
			\newblock {\em American Mathematical Monthly}, 40 (12): 113 (10): 933–935, 2006.
			
			\bibitem{Erdmann2018}Erdmann, K. \& Holm, T. Simple Modules and the Jordan–Hölder Theorem. {\em Algebras And Representation Theory}. pp. 61-84 (2018)
			
			\bibitem{nikolov2007finitely}Nikolov, N., Segal, D. On finitely generated profinite groups, I: strong completeness and uniform bounds. {\em Annals Of Mathematics}. pp. 171-238 (2007)
			
			\bibitem{nikolov2016verbal}Nikolov, N. Verbal width in anabelian groups. {\em Israel Journal Of Mathematics}. \textbf{216} pp. 847-876 (2016)
			
			\bibitem{holt1989enumerating}Holt, D. Enumerating perfect groups. {\em Journal Of The London Mathematical Society}. \textbf{2}, 67-78 (1989)
			
			\bibitem{bar2024profinite}Bar-On, T. Profinite completion of free pro-$\mathcal{C}$ groups. {\em Journal Of Algebra And Its Applications}. pp. 2550338 (2024)
			
			\bibitem{bar2022profinite}Bar-On, T. The profinite completion of a profinite projective group. {\em Glasgow Mathematical Journal}. \textbf{64}, 499-503 (2022)
			
			\bibitem{bar2024cohomological}Bar-On, T. Cohomological properties of maximal pro-$p$ Galois groups that are preserved under profinite completion. {\em Forum Mathematicum}. (2024)
			
			\bibitem{bar2024groups}Bar-On, T. \& Nikolov, N. Groups of profinite type and profinite rigidity. {\em Journal Of Group Theory}. (2024)
		\end{thebibliography}

	\end{document}